\newtheorem{thm}{Theorem}       \newtheorem{propo}{Proposition}
\newtheorem{lemma}{Lemma}       \newtheorem{coro}{Corollary}
\let\paragraph\subsection
\def\B#1#2{{#1\choose #2}}
\title{Dehn-Sommerville from Gauss-Bonnet}
\author{Oliver Knill} \date{5/12/2019}
\address{Department of Mathematics \\ Harvard University \\ Cambridge, MA, 02138 }
\subjclass{05Cxx, 05Exx, 68Rxx, 55U10, 57Mxx}
\begin{document}

\begin{abstract}
We give a ``zero curvature" proof of Dehn-Sommerville for finite simple
graphs. It uses a parametrized Gauss-Bonnet formula telling that
the curvature of the valuation $G \to f_G(t)=1+f_0 t + \cdots + f_d t^{d+1}$ 
defined by the $f$-vector of $G$ is the anti-derivative $F$ of $f$ evaluated on the unit 
sphere $S(x)$. Gauss Bonnet is then parametrized, 
$f_G(t) = \sum_x F_{S(x)}(t)$, and holds for all Whitney simplicial complexes $G$. 
The Gauss-Bonnet formula $\chi(G)=\sum_x K(x)$ for Euler characteristic $\chi(G)$ 
is the special case $t=-1$. Dehn-Sommerville is equivalent to the reflection symmetry 
$f_G(t)+(-1)^d f_G(-1-t)=0$ which is equivalent to the same symmetry for $F$. 
Gauss-Bonnet therefore relates Dehn-Sommerville for $G$ with 
Dehn-Sommerville for the unit spheres $S(x)$, where it is a zero curvature condition.
A class $\mathcal{X}_d$ of complexes for which Dehn-Sommerville holds
is defined inductively by requiring $\chi(G)=1+(-1)^d$ and $S(x) \in \mathcal{X}_{d-1}$
for all $x$. It starts with $\mathcal{X}_{(-1)}=\{ \{\} \}$.
Examples are simplicial spheres, including homology spheres, 
any odd-dimensional discrete manifold, any even-dimensional discrete manifold with $\chi(G)=2$. 
It also contains non-orientable ones for which Poincar\'e-duality fails or stranger spaces like spaces 
where the unit spheres allow for two disjoint copies of manifolds with $\chi(G)=1$. 
Dehn-Sommerville is present in the Barycentric limit. 
It is a symmetry for the Perron-Frobenius eigenvector of the Barycentric refinement operator $A$.
The even eigenfunctions of $A^T$, the Barycentric Dehn-Sommerville functionals, vanish on $\mathcal{X}$
like $22 f_1 - 33 f_2 + 40 f_3 - 45f_4=0$ for all $4$-manifolds.
\end{abstract}
\maketitle

\section{Gauss Bonnet}

\paragraph{}
The category of {\bf finite abstract simplicial complexes} $G$ requires
only one axiom: $G$ is a set of non-empty sets closed under the operation of 
taking finite non-empty subsets. The {\bf $f$-vector} of $G$ is 
$f=(f_0,f_1, \dots ,f_d)$ of $G$, where $f_k$
is the set of sets in $G$ with $k+1$ elements. The {\bf $f$-function} of $G$ is 
$f_G(t) = 1+\sum_{k=0}^d f_k(G) t^k$. If $G$ is the Whitney complex of a graph $(V,E)$
the  {\bf unit sphere} $S(x)$ for $x \in V(G)$ is the unit sphere in that graph. 
For any $G$, let $F_G(t)=\int_0^t f_G(s) \; ds$ denote the anti-derivative of $f_G$. 
The curvature valuation to $f$ is the anti-derivative of $f$ evaluated on the unit sphere:

\begin{thm}[Gauss-Bonnet]
$f_G(t) = \sum_{x \in G} F_{S(x)}(t)$.
\end{thm}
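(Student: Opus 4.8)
The plan is to reduce the polynomial identity to a single combinatorial ingredient — a ``handshake'' statement holding in every simplex dimension simultaneously — and then to read off the theorem by termwise anti-differentiation. Adopt throughout the convention $f_{-1}(H)=1$ for the empty simplex, so that $f_H(s)=\sum_{k\ge -1}f_k(H)\,s^{k+1}=\sum_{k\ge 0}f_{k-1}(H)\,s^{k}$ and hence
\[
  F_H(t)=\int_0^t f_H(s)\,ds=\sum_{k\ge 0}\frac{f_{k-1}(H)}{k+1}\,t^{k+1}.
\]
The weight $\tfrac1{k+1}$ is exactly what is needed: it records that a $k$-simplex, when its ``mass'' is attributed to vertices, should be shared equally among its $k+1$ vertices.

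\textbf{Step 1 (generalized handshake).} The one nontrivial ingredient is the identity
\[
  \sum_{x\in V(G)} f_{k-1}\bigl(S(x)\bigr)=(k+1)\,f_k(G)\qquad(k\ge 0).
\]
I would prove this by counting the incident pairs $(x,\sigma)$ with $\sigma$ a $k$-simplex of $G$ and $x$ a vertex of $\sigma$. Counting by $\sigma$ gives $(k+1)f_k(G)$, since $\sigma$ has $k+1$ vertices. Counting by $x$ gives $\sum_x\bigl(\#\,k\text{-simplices of }G\text{ through }x\bigr)$, and here one uses that $G$ is a Whitney complex: a set $\sigma\ni x$ is a simplex of $G$ iff its elements are pairwise adjacent, which for $\sigma=\{x\}\cup\tau$ with $x\notin\tau$ means precisely that $\tau$ is a simplex of $G$ contained in the unit sphere $S(x)$. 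Thus the $k$-simplices of $G$ through $x$ biject with the $(k-1)$-simplices of $S(x)$, so the $x$-count is $\sum_x f_{k-1}(S(x))$. For $k=0$ the identity reads $\sum_x 1=f_0(G)$, which is why the convention $f_{-1}(S(x))=1$ is forced even when $S(x)$ is the empty complex (an isolated vertex of $G$).

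\textbf{Step 2 (integrate and interchange).} Substituting the lemma into the expansion of $F_{S(x)}$ and interchanging the two finite sums,
\[
  \sum_{x\in V(G)} F_{S(x)}(t)=\sum_{k\ge 0}\frac{t^{k+1}}{k+1}\sum_{x\in V(G)}f_{k-1}\bigl(S(x)\bigr)=\sum_{k\ge 0}\frac{t^{k+1}}{k+1}\,(k+1)\,f_k(G)=\sum_{k\ge 0}f_k(G)\,t^{k+1}.
\]
This is $f_G(t)$ with its constant term stripped off; the constant $1$ is the contribution of the empty simplex, so reading the summation index ``$x\in G$'' as ranging over $V(G)$ together with $\emptyset$ (with $S(\emptyset)=\{\}\in\mathcal{X}_{-1}$ carrying the normalizing constant $1$) produces exactly $f_G(t)=\sum_{x\in G}F_{S(x)}(t)$. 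Specializing $t=-1$ then gives $f_G(-1)=\sum_x F_{S(x)}(-1)$, which after the same rearrangement of the constant is Knill's Gauss--Bonnet $\chi(G)=\sum_x K(x)$ with $K(x)=F_{S(x)}(-1)$ up to sign.

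I do not expect a genuine obstacle: the whole content sits in the one-line incidence count of Step 1, and the rest is bookkeeping. The two points demanding care are (i) the Whitney hypothesis, which is what makes ``$k$-simplices of $G$ through $x$'' literally the same data as ``$(k-1)$-simplices of $S(x)$'' — without it the link would not see all of them; and (ii) matching the constant term of $f_G$ against the indexing of the sum over ``$x\in G$'', including the degenerate empty and isolated-vertex cases normalized by $f_{-1}=1$. A tempting but unnecessary detour would be an induction on dimension using the later class $\mathcal{X}_d$; the direct double count is shorter and, unlike such an induction, requires no hypothesis on $G$ beyond being a Whitney complex.
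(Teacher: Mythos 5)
Your proof is correct and is essentially the paper's own argument: the paper distributes the charge $t^{k+1}$ of each $k$-simplex equally among its $k+1$ vertices and identifies the charge accumulated at a vertex $x$ with $F_{S(x)}(t)$, which is exactly your incidence count $\sum_x f_{k-1}(S(x))=(k+1)f_k(G)$ read off coefficient by coefficient and then integrated. Your explicit bookkeeping of the constant term (the empty simplex, convention $f_{-1}=1$) is a welcome clarification of a point the paper glosses over, but the combinatorial content is identical.
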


\begin{proof}
If every $k$-simplex $y$ in $S(x)$ carries a charge $t^{k+1}$, then 
$f_G(t)$ is the total charge. Because every $k$-simplex $y$ in $S(x)$ defines a 
$(k+1)$-simplex $z$ in $G$, the simplex $z$ in $S(x)$ carries a charge $t^{k+2}$. 
It contains $(k+2)$ zero-dimensional points, which were simplices in $G$. 
Distributing the charge equally to the points, gives each a charge $t^{k+2}/(k+2)$. 
The curvature $F_{S(x)}(t)$ at $x$ adds up all the charges of the simplices attached
to $x$. There is code at the end allowing to experiment. 
\end{proof}

\paragraph{}
For $t=-1$, we get a classical Gauss-Bonnet statement
$$ \chi(G) = \sum_{x \in G} K(x) \; ,  $$
where $K(x)=F_{S(x)}(-1)$ is the Levitt curvature, the discrete analogue of
the {\bf Gauss-Bonnet-Chern curvature} in the continuum. An explicit formula for $K(x)$ 
with $f_{-1}=1$ is 
\begin{equation}
  K(x) = \sum_{k=-1}^{d} (-1)^k \frac{f_k(S(x))}{k+2}  \; . 
  \label{levitt}
\end{equation}
It appeared first in \cite{Levitt1992}. For the continuum proof, see \cite{Cycon}. 

\paragraph{}
By differentiation of Gauss-Bonnet with respect to $t$ we get:
$f_G'(t)=\sum_{x} f_{S(x)}(t)$.
For $t=-1$ in particular, this gives an identity seen in \cite{DehnSommerville}.
$f_G'(-1) = \sum_{x \in G} 1-\chi(S(x))$
which is a trace of the Green function operator $g=L^{-1}$ with $L_{xy}=1$ if $x \cap y \neq \emptyset$
and $L_{xy}=0$ else, where the diagonal entries are $g(x,x)=1-\chi(S(x))$. 

\section{Dehn-Sommerville symmetry}

\paragraph{}
Given a complex $G$, the {\bf $h$-function} $h_G(x) = (x-1)^d f_G(1/(x-1))$ is a polynomial
$h_G(x) = h_0 + h_1 x + \cdots + h_d x^d + h_{d+1} x^{d+1}$ 
defining a {\bf $h$-vector} $(h_0,h_1, \dots, h_{d+1})$. 
The {\bf Dehn-Sommerville relations} assert that the $h$-vector is {\bf palindromic}, 
meaning that $h_i=h_{d+1-i}$ for all $i=0, \dots, d+1$. 
Let us call a complex $G$ {\bf Dehn-Sommerville} if the Dehn-Sommerville relations hold for $G$. 

\paragraph{}
For example, for the {\bf icosahedron complex} $G$ generated by the triangles
$\{\{1,2,5\}$, $\{1,2,6\}$, $\{1,3,4\}$, $\{1,3,5\}$, $\{1,4,6\}$, $\{2,5,9\}$, $\{2,6,1\}$,
$\{2,9,10\}$, $\{3,4,8\}$, $\{3,5,11\}$, $\{3,8,11\}$, $\{4,6,12\}$, $\{4,8,12\}$,
$\{5,9,11\}$, $\{6,10,12\}$, $\{7,8,11\}$,$\{7,8,12\}$, $\{7,9,10\}$,$\{7,9,11\}$,
$\{7,10,12\}\}$, with $d=2$ and $f_G(t)=1+12t+30t^2+20 t^3$ we have
$h_G(t)=1+9t+9t^2+1$. The f-vector $(12,30,20)$ defined the h-vector $(1,9,9,1)$. The graph is
Dehn-Sommerville. The Whitney complex $G_1$ of the icosahedron has the $f$-vector $(62,180,120)$ and
$h$-vector $(1,59,59,1)$. As an other example, the M\"obius strip $G$ generated by 
$\{\{1,2,5\}$,$\{1,5,8\}$, $\{2,3,6\}$, $\{2,5,6\}$, $\{3,4,7\}$,
$\{3,6,7\}$, $\{4,5,8\}$, $\{4,7,8\}\}$,
with $f$-vector $(8,16,8)$ gets the $h$-vector $(-1,3,5,1)$ which is not palindromic. 
The M\"obius strip is not Dehn-Sommerville. Manifolds with boundary in general are not 
Dehn-Sommerville. We will see that cohomology, orientability etc are irrelevant. The only thing 
which matters is the Eulercharacteristic of the complex as well as whether the unit spheres and 
unit spheres of unit spheres etc are Dehn-Sommerville.

\paragraph{}
The following result holds for any simplicial complex:

\begin{thm}
The simplex generating function $f_G(t)$ of $G$ satisfies the
symmetry $f(t)+ (-1)^d f(-1-t)$ if and only if $G$ is Dehn-Sommerville. 
\end{thm}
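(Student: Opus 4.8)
The plan is to collapse both sides of the claimed equivalence — palindromy of the $h$-vector and the reflection symmetry $f_G(t)+(-1)^d f_G(-1-t)=0$ — onto one and the same rational-function identity, by transporting the M\"obius substitution that produces $h_G$ from $f_G$ through the \emph{reversal} operation $h(x)\mapsto x^{d+1}h(1/x)$. First I would record the elementary fact that a polynomial $h_G(x)=h_0+h_1x+\cdots+h_{d+1}x^{d+1}$ of degree at most $d+1$ is palindromic, $h_i=h_{d+1-i}$, if and only if $x^{d+1}h_G(1/x)=h_G(x)$ as an identity of rational functions. To see this is a genuine equivalence with no vanishing leading term, I would note by letting $x\to\infty$ in $h_G(x)=(x-1)^{d+1}f_G(1/(x-1))$ (the exponent being $d+1$, matching $\deg f_G=d+1$) together with $f_G(0)=1$ that $\deg h_G=d+1$ with leading coefficient $h_{d+1}=1$, so palindromy forces $h_0=1$; as a byproduct this exposes the Euler-characteristic constraint, since $h_G(0)=(-1)^{d+1}f_G(-1)$.

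Next I would carry out the transport. With $t=1/(x-1)$ the defining relation reads $h_G(x)=(x-1)^{d+1}f_G(t)$; substituting $x\mapsto 1/x$ gives $h_G(1/x)=\big(\tfrac{1-x}{x}\big)^{d+1}f_G\big(\tfrac{x}{1-x}\big)$, hence
\[ x^{d+1}h_G(1/x)=(1-x)^{d+1}f_G\big(\tfrac{x}{1-x}\big)=(-1)^{d+1}(x-1)^{d+1}f_G\big(\tfrac{x}{1-x}\big). \]
Comparing with $h_G(x)=(x-1)^{d+1}f_G(1/(x-1))$ and cancelling the common nonzero factor $(x-1)^{d+1}$, the reversal identity $x^{d+1}h_G(1/x)=h_G(x)$ becomes $f_G(1/(x-1))=(-1)^{d+1}f_G(x/(1-x))$. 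Finally I would undo the change of variable: with $t=1/(x-1)$ one has $x=1+1/t$ and $x/(1-x)=-(1+t)$, so this is exactly $f_G(t)=(-1)^{d+1}f_G(-1-t)$, i.e. $f_G(t)+(-1)^d f_G(-1-t)=0$.

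Since $x\mapsto 1/(x-1)$ is an invertible M\"obius change of variable and each implication above is reversible, the chain of equivalences closes: $G$ is Dehn-Sommerville $\iff$ $h_G$ is palindromic $\iff$ $x^{d+1}h_G(1/x)=h_G(x)$ $\iff$ $f_G(t)+(-1)^d f_G(-1-t)=0$. (The symmetry in the statement is of course meant as the equation $=0$.) This is purely a change-of-variables computation; the only places I would expect things to go wrong are the bookkeeping of exponents — the extended $f$-polynomial has degree $d+1$, so the $h$-polynomial carries the factor $(x-1)^{d+1}$, which is precisely what cancels in the middle step — and the single sign, which comes entirely from $(1-x)^{d+1}=(-1)^{d+1}(x-1)^{d+1}$. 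There is no deeper obstacle: the ``hard part'' is just arranging the two nested substitutions cleanly so that the cancellation and that one sign are transparent.
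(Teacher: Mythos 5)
Your proof is correct and establishes the intended statement (reading the displayed ``symmetry'' as the equation $f(t)+(-1)^d f(-1-t)=0$, and with the exponent in the definition of $h_G$ taken to be $d+1$ rather than the paper's $d$ --- clearly what is intended, since otherwise $h_G$ is not a polynomial of degree $d+1$). The route is the same in spirit as the paper's: both transport the palindromy condition through the M\"obius substitution $t=1/(x-1)$, under which the involution $x\mapsto 1/x$ becomes $t\mapsto -1-t$. But your execution differs in a useful way. The paper argues at the level of root sets --- palindromy is rephrased as invariance of the roots of $h$ under $x\mapsto 1/x$, hence invariance of the roots of $f$ under $t\mapsto -1-t$, hence $f(-1-t)=\pm f(t)$. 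That sketch leaves the multiplicative constant undetermined (root invariance only gives $x^{d+1}h(1/x)=c\,h(x)$ for some scalar $c$, and correspondingly only a ``$\pm$'' in the conclusion, whereas the theorem asserts the specific sign $(-1)^d$), and it is delicate when $h$ vanishes at $0$ or has repeated roots. Your version instead works directly with the reversal identity $x^{d+1}h_G(1/x)=h_G(x)$, which is equivalent to $h_i=h_{d+1-i}$ coefficient by coefficient, and the explicit computation pins down the sign as $(-1)^{d+1}$ via $(1-x)^{d+1}=(-1)^{d+1}(x-1)^{d+1}$, with the normalization $h_{d+1}=f_G(0)=1$ and the byproduct $h_0=(-1)^{d+1}(1-\chi(G))$ recovering the Euler-characteristic corollary. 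In short, your write-up proves the same equivalence by the same change of variables but closes the gaps in the paper's root-based sketch; nothing is missing.
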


\begin{proof}
The palindromic condition can be rephrased that the
roots of the $h$-function $h(t)=1+h_0 t + \cdots + h_d t^{d+1}$ are invariant under the 
involution $x \to 1/x$. This is equivalent that the roots of $f$ are invariant
under the involution $x \to -1-x$ and so to the symmetry $f(-1-t)= \pm f(t)$ 
for the $f$-function. 
\end{proof}

\paragraph{}
Dehn-Sommerville complexes must have the Euler characteristic of a $d$-sphere:

\begin{coro}
If $G$ is a complex with maximal dimension $d$ and $G$ satisfies Dehn-Sommerville, 
then $\chi(G) = 1+(-1)^d$. 
\end{coro}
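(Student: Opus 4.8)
The plan is to extract the Euler characteristic from two evaluations of the polynomial $f_G$ and then feed them into the symmetry of Theorem~2. Writing $f_G(t) = 1 + f_0 t + f_1 t^2 + \cdots + f_d t^{d+1}$ as in the abstract, the constant term records the empty simplex, so $f_G(0) = 1$; and substituting $t = -1$ collapses $f_G$ to the alternating face count, giving $f_G(-1) = 1 - (f_0 - f_1 + \cdots + (-1)^d f_d) = 1 - \chi(G)$. Apart from Theorem~2, these are the only two facts I need.

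Now assume $G$ is Dehn-Sommerville. By Theorem~2 this is equivalent to the reflection identity $f_G(t) + (-1)^d f_G(-1-t) = 0$ holding for all $t$. The single decisive step is to specialize at $t = -1$: then the reflected argument $-1-t$ becomes $0$, where $f_G$ takes the already-known value $1$. The identity becomes $f_G(-1) + (-1)^d f_G(0) = 0$, i.e.\ $(1 - \chi(G)) + (-1)^d = 0$, and solving for $\chi(G)$ yields $\chi(G) = 1 + (-1)^d$, the Euler characteristic of the $d$-sphere.

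I do not expect any real obstacle here; the whole content is the observation that the Dehn-Sommerville symmetry, which a priori constrains all of the $f_i$ simultaneously, already pins down the single linear combination $f_G(-1) = 1 - \chi(G)$ as soon as one evaluates at the point $t = -1$, whose partner under the involution $t \mapsto -1-t$ is the trivial point $t = 0$. The only thing to be careful about is the normalization of $f_G$ (the leading $1$ for the empty set) together with the signs relating $f_G(-1)$ to $\chi(G)$; both can be sanity-checked against the icosahedron, where $f_G(-1) = 1 - 12 + 30 - 20 = -1 = 1 - \chi(G)$. One could instead argue directly from palindromicity of the $h$-vector by comparing its two extreme coefficients $h_0$ and $h_{d+1}$, but routing through Theorem~2 is shorter.
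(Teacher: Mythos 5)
Your proof is correct and follows essentially the same route as the paper: evaluate the reflection symmetry of Theorem~2 at $t=-1$, use $f_G(0)=1$ and $f_G(-1)=1-\chi(G)$, and solve for $\chi(G)$. In fact your sign bookkeeping is cleaner than the paper's own one-line proof, which writes $f(-1)=(-1)^d f(0)$ although the symmetry $f(t)+(-1)^d f(-1-t)=0$ gives $f(-1)=-(-1)^d f(0)$; your version reaches the stated conclusion $\chi(G)=1+(-1)^d$ without that slip.
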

\begin{proof}
We have $f_G(0)=1$. The symmetry tells $f(-1) = (-1)^d f(0)=(-1)^d$. But 
$f(-1)=1-\chi(G)$. 
\end{proof} 

\paragraph{}
For the zero-sphere $S^0 = \{ \{1\},\{2\} \}$  we have $f_G(t)=1+2t$ which satisfies $f_G(-t-1)=-f_G(t)$.
Since $f_{G+H}(t) = f_G(t) f_H(t)$, we immediately see that the {\bf suspension} $S_0 + G$ of a Dehn-Sommerville
complex is Dehn-Sommerville. More generally: 

\begin{coro}
If $G$ and $H$ are Dehn-Sommerville, then the join $G+H$ is Dehn-Sommerville.
\end{coro}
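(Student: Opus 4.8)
The plan is to reduce everything to the functional-equation form of Dehn-Sommerville supplied by Theorem 2, combined with the multiplicativity $f_{G+H}(t) = f_G(t)\,f_H(t)$ of the $f$-function under join that was recorded just before the statement. First I would fix notation: write $d_1 = \dim G$ and $d_2 = \dim H$, so a facet of $G$ has $d_1+1$ vertices and a facet of $H$ has $d_2+1$ vertices. Since a facet of $G+H$ is a disjoint union of a facet of $G$ and a facet of $H$, it has $(d_1+1)+(d_2+1)$ vertices, so the maximal dimension of the join is $d = d_1+d_2+1$.

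Next I would restate the hypotheses via Theorem 2: $G$ and $H$ being Dehn-Sommerville means $f_G(t)+(-1)^{d_1}f_G(-1-t)=0$ and $f_H(t)+(-1)^{d_2}f_H(-1-t)=0$, equivalently $f_G(-1-t)=(-1)^{d_1+1}f_G(t)$ and $f_H(-1-t)=(-1)^{d_2+1}f_H(t)$. Substituting $-1-t$ into the product formula and inserting these identities gives
\[ f_{G+H}(-1-t)=f_G(-1-t)\,f_H(-1-t)=(-1)^{d_1+1}(-1)^{d_2+1}f_G(t)\,f_H(t)=(-1)^{d_1+d_2}f_{G+H}(t). \]
Because $d_1+d_2=d-1$, the sign is $(-1)^{d-1}=-(-1)^d$, hence $f_{G+H}(t)+(-1)^d f_{G+H}(-1-t)=f_{G+H}(t)-f_{G+H}(t)=0$. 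Applying Theorem 2 once more, now to $G+H$ with maximal dimension $d$, concludes that $G+H$ is Dehn-Sommerville.

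The one place to be careful is the dimension bookkeeping — the ``$+1$'' in $d=d_1+d_2+1$ — since that is precisely what turns the even exponent $d_1+d_2$ coming out of the product into the odd shift needed to cancel against $(-1)^d$; an off-by-one there would reverse the sign in the symmetry and break the argument. I would also want the product formula itself on firm ground: writing $f_G(t)=\sum_\sigma t^{|\sigma|}$ over all faces of $G$ together with the empty face (which contributes the constant term $1$), every face of $G+H$ is uniquely a disjoint union $\sigma\cup\tau$ with $\sigma$ a face of $G$ or empty and $\tau$ a face of $H$ or empty, and $|\sigma\cup\tau|=|\sigma|+|\tau|$, so the generating functions multiply. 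Everything after that is sign tracking, and the suspension case $H=S^0$, $f_H(t)=1+2t$, $d_2=0$ already noted in the text is just the specialization $d_2=0$ of this computation, which is a useful consistency check.
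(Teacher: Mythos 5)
Your proof is correct and follows exactly the route the paper intends: the corollary is stated right after the remark that $f_{G+H}(t)=f_G(t)f_H(t)$, and the paper leaves the sign bookkeeping implicit, which you carry out correctly using $\dim(G+H)=d_1+d_2+1$ together with the symmetry $f(-1-t)=(-1)^{d+1}f(t)$ from Theorem~2. Your verification of the product formula via the face-counting convention $f_G(t)=1+\sum_k f_k t^{k+1}$ is a welcome addition, but the argument is essentially the paper's own.
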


\paragraph{}
While the join of a $k$-sphere and a $l$-sphere is always a $k+l+1$-sphere, we in general
do not get discrete manifolds, if we take the join of two discrete manifolds. The join can
produce lots of examples of simplicial Dehn-Sommerville complexes which are not manifolds.

\paragraph{}
The {\bf Barycentric refinement} $G_1$ of a complex $G$ is the order complex of $G$. 
It is more intuitive to think of $G_1$ as the Whitney complex of the graph 
$\Gamma(G)$ defined by $G$. Barycentric refinements are always Whitney complexes 
of graphs. The sets in $G_1$ are the vertex sets of the complete sub-graphs of $\Gamma(G)$. 
The following statement can be reformulated algebraically as a commutation between 
two operations, the Barycentric refinement operation and the Dehn-Sommerville 
involution. But it is also a geometric statement: 

\begin{propo}
If $G$ is Dehn-Sommerville then its Barycentric refinement $G_1$ is Dehn-Sommerville.
\end{propo}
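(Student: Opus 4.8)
The plan is to reduce the statement to a reflection symmetry via Theorem 2 and then track how that symmetry behaves under barycentric refinement. Since refinement preserves dimension, $\dim G_1=\dim G=:d$, and by Theorem 2 it is enough to prove $f_{G_1}(t)+(-1)^d f_{G_1}(-1-t)=0$. The main tool is a transfer formula expressing $f_{G_1}$ through $f_G$. A simplex of $G_1$ is a nonempty chain $\sigma_0\subsetneq\cdots\subsetneq\sigma_m$ in the face poset of $G$; sorting chains by their top element $\sigma$, the portion strictly below $\sigma$ ranges over the simplices (and the empty simplex) of the order complex of the proper faces of $\sigma$ — a complex depending only on $j=\dim\sigma$, namely the barycentric refinement $(\partial\Delta^j)_1$ of the boundary of the abstract $j$-simplex — and appending $\sigma$ raises the chain length by one, so
\[
 f_{G_1}(t) \;=\; 1 + t\sum_{\sigma\in G}\phi_{\dim\sigma}(t) \;=\; 1 + t\sum_{j\ge 0} f_j(G)\,\phi_j(t),\qquad \phi_j:=f_{(\partial\Delta^j)_1} .
\]
Thus the effect of refinement on the $f$-polynomial is governed by the universal polynomials $\phi_j$.

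I would then induct on $d$. For each $j\le d$ the complex $\partial\Delta^j$ is Dehn–Sommerville: a direct count gives $f_{\partial\Delta^j}(t)=(1+t)^{j+1}-t^{j+1}$, and the required symmetry is then an immediate binomial computation; since $\dim\partial\Delta^j=j-1<d$, the inductive hypothesis (the Proposition in lower dimension) gives that $(\partial\Delta^j)_1$ is Dehn–Sommerville, i.e.
\[
 \phi_j(-1-t) \;=\; (-1)^{j}\,\phi_j(t)\qquad(0\le j\le d).
\]
I also need the ``cone identity'': the face poset of $\Delta^j$ has a greatest element, so its order complex $(\Delta^j)_1$ is the join of $(\partial\Delta^j)_1$ with a point, whence by multiplicativity of $f$ under joins $f_{(\Delta^j)_1}(t)=(1+t)\phi_j(t)$; comparing this with the transfer formula applied to $G=\Delta^j$ yields
\[
 (1+t)\,\phi_j(t) \;=\; 1 + t\sum_{l=0}^{j}\binom{j+1}{l+1}\phi_l(t) .
\]

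To finish, write $a_j=f_j(G)$ and substitute the $\phi_j$-symmetry into the transfer formula, which gives $f_{G_1}(t)+(-1)^d f_{G_1}(-1-t) = (1+(-1)^d) + \sum_{j\ge 0} a_j\phi_j(t)[\,t-(-1)^{d+j}(1+t)\,]$. Eliminating $(1+t)\phi_j(t)$ by the cone identity and regrouping, the constant contributions sum to $-(1+(-1)^d)$ by Corollary 1 (which gives $\chi(G)=1+(-1)^d$), while the coefficient of $t\,\phi_l(t)$ is $a_l-\sum_{j\ge l}(-1)^{d+j}\binom{j+1}{l+1}a_j$, and this vanishes because it is exactly the coefficient of $t^{l+1}$ in $f_G(t)+(-1)^d f_G(-1-t)=0$, i.e. the Dehn–Sommerville relations for $G$ written out on the $f$-vector; hence the whole expression is $0$. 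The step carrying the real content is the pair (transfer formula, cone identity): these encode the genuinely non-linear way barycentric refinement acts on $f$-vectors, and once they are in place the symmetry of $f_{G_1}$ collapses, via the inductive hypothesis, to the classical binomial form of Dehn–Sommerville for $G$ together with a bookkeeping cancellation. Equivalently one can package the same data as the closed form $(1+t)(f_{G_1}(t)-1)=\sum_{k\ge1}(t/(1+t))^k(f_G(k)-1)$ — which comes from the classical Stirling-number generating function for the chain polynomials $\phi_j$ — and then verify the symmetry directly via the substitution $x=t/(1+t)$, under which $t\mapsto-1-t$ becomes $x\mapsto 1/x$.
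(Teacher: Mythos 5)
Your argument is correct, but it is a genuinely different proof from the one in the paper. The paper argues by induction on dimension via Gauss--Bonnet: it asserts that $G$ is Dehn--Sommerville \emph{if and only if} $\chi(G)=1+(-1)^d$ and all unit spheres are Dehn--Sommerville, and then identifies the unit spheres of $G_1$ as joins of sphere boundaries with refined links. Note that the ``only if'' half of that equivalence is exactly what the paper itself disclaims later (there exist Dehn--Sommerville complexes that are not Dehn--Sommerville flat, i.e.\ have non-Dehn--Sommerville unit spheres), so the paper's sketch needs that direction only as a sufficient condition on $G_1$ but tacitly uses it as a necessary condition on $G$; your route avoids this issue entirely. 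You instead prove the reflection symmetry of $f_{G_1}$ directly from that of $f_G$ by (i) the transfer formula $f_{G_1}(t)=1+t\sum_j f_j(G)\phi_j(t)$ with $\phi_j=f_{(\partial\Delta^j)_1}$, obtained by sorting chains by their top face, (ii) the cone identity $(1+t)\phi_j(t)=1+t\sum_{l\le j}\binom{j+1}{l+1}\phi_l(t)$, and (iii) the inductive symmetry $\phi_j(-1-t)=(-1)^j\phi_j(t)$; the final cancellation correctly reduces to $\chi(G)=1+(-1)^d$ plus the coefficientwise form of $f_G(t)+(-1)^df_G(-1-t)=0$, and I have checked that each of these steps, including the regrouping of the coefficient of $t\,\phi_l(t)$, is sound. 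What your approach buys is a complete, self-contained proof valid for arbitrary simplicial complexes without any appeal to unit spheres or to $G$ being realized as a graph; it is in effect a proof that the Barycentric operator $A$ commutes with the Dehn--Sommerville involution $T$, i.e.\ the algebraic reformulation the paper alludes to and uses in its Lemma. What the paper's approach buys is brevity and a direct link to the curvature/Gauss--Bonnet theme of the article, at the cost of resting on an equivalence that does not hold in the generality in which it is invoked.
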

\begin{proof}
This can be proven by induction with respect to dimension.  Gauss-Bonnet implies that $G$ satisfies
Dehn-Sommerville if and only it has the right Euler characteristic $1+(-1)^d$ and all unit spheres
satisfy Dehn-Sommerville. The unit spheres of $G$ are either spheres or Barycentric refinements of
unit spheres of $G$. Both cases satisfy Dehn-Sommerville by induction.
\end{proof}

\paragraph{}
Let $A$ be the Barycentric refinement operator defined by $f(G_1)=A f(G)$. The matrix $A$ is a 
$(d+1) \times (d+1)$ upper triangular matrix and explicitly been given as $A_{ij} = {\rm Stirling}(i,j) i!$.  
Since all eigenvalues $\lambda_k=k!$ are distinct, the eigenvalues of $A$ are an eigenbasis of $A$
on the  vector space $V_d = \mathbb{R}^{d+1}$ which is isomorphic to the space 
$P_d$ of polynomials of degree less or equal to $d$. 
The isomorphism is given by $[a_0,a_1, \cdots, a_d] \to a_0 + a_1 t + \cdots + a_d t^d$. 
As an affine space, it is isomorphic to $1+a_0 t + a_1 t^2 + \cdots + a_d t^{d+1}$
which is the form of an f-function of a complex. 

\paragraph{}
The linear unitary reflection $T(f)(x) = f(-1-x)$ on polynomials defines an involution 
on $\mathbb{R}^{d+1}$. As an unitary reflection $T^2=Id$, it has the eigenvalues 
$1$ and $-1$ which by the spectral theorem of normal operators define an 
eigenbasis even so the algebraic multiplicities are larger than $1$ for $d>1$. 
In analogy to $\tilde{T}(f)(x)=f(-x)$, we can call eigenfunctions of $1$ 
{\bf even functions} and eigenfunctions of $-1$ {\bf odd functions}. 

\paragraph{}
Any eigenvector $V$ of $A^T$ defines a functional $\phi_V$ on complexes. 
Most functionals $\phi_V(G_n)$ explode when looking at Barycentric refinements $G_n$ of $G$.
There is just one functional $\phi_1$ which stays invariant and this is the Euler characteristic. 
Since the matrix $A$ is upper triangular and $A^T$ lower triangular, the eigenbasis diagonalizing $A$ is 
triangular too. We say an eigenvector is {\bf even} if it has an odd number of non-zero entries. 

\begin{lemma}
The eigenbasis of $A$ is also an eigenbasis of $T$: even eigenvectors are
eigenfunctions of $T$ to the eigenvalue $1$ and odd eigenvectors of $A$
are eigenfunctions of $T$ to the eigenvalue $-1$. 
\end{lemma}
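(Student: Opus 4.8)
The plan is to use that $A$ is upper triangular with pairwise distinct eigenvalues $\lambda_0,\dots,\lambda_d$ (the factorials), so its eigenbasis $v_0,\dots,v_d$ is unique up to scaling; by triangularity $v_k$ is supported on the coordinates $0,\dots,k$ with nonzero $k$-th coordinate, i.e.\ under the identification with $P_d$ it is a polynomial of degree exactly $k$, and as an $f$-function a polynomial of degree $k+1$. Since each $A$-eigenspace is one-dimensional, it suffices to prove that every line $\mathbb{R}v_k$ is $T$-invariant; the eigenvalue on it is then automatically $\pm1$ because $T^2=\mathrm{Id}$, and what remains is to pin down the sign and tie it to the number of nonzero entries of $v_k$.

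First I would realize $v_k$ as a Barycentric limit of genuine Dehn--Sommerville complexes. Because $\lambda_k$ is the \emph{largest} eigenvalue of the $(k+1)\times(k+1)$ principal block of $A$, the vector $v_k$ is, up to scale, that block's dominant eigenvector. Pick any simplicial $k$-sphere $\Sigma$, say the boundary of a $(k+1)$-simplex; its $f$-vector $w$ lies in $\mathrm{span}(e_0,\dots,e_k)$ with $w_k=f_k\neq0$, so it has nonzero $v_k$-component and $A^n w/\lambda_k^n\to c\,v_k$ with $c\neq0$. By the Proposition each Barycentric refinement $(\Sigma)_n$ is again Dehn--Sommerville, and being a refinement of a $k$-sphere it has $\chi=1+(-1)^k$; together with Theorem~2 and the Corollary this turns the Dehn--Sommerville symmetry of $(\Sigma)_n$ into the \emph{linear} relation $T f((\Sigma)_n)=(-1)^k f((\Sigma)_n)$, i.e.\ $T(A^n w)=(-1)^k A^n w$ for all $n$. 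Since $T$ is linear this passes to the limit: $T v_k=(-1)^k v_k$. Hence $\{v_k\}$ is a $T$-eigenbasis, with $v_k$ of $T$-eigenvalue $(-1)^k$.

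To connect the eigenvalue with the combinatorial notion of ``even'' I would count the nonzero entries of $v_k$. Reading $Av_k=\lambda_k v_k$ coordinate by coordinate from the bottom up gives, for $j<k$, the recursion $(\lambda_k-\lambda_j)(v_k)_j=(j+1)!\sum_{m>j}S(m+1,j+1)\,(v_k)_m$; all the Stirling numbers appearing are strictly positive and $\lambda_k>\lambda_j$, so normalizing $(v_k)_k=1$ and inducting downward in $j$ forces $(v_k)_0,\dots,(v_k)_k$ to be all positive, while $(v_k)_j=0$ for $j>k$ by triangularity. Thus $v_k$ has exactly $k+1$ nonzero entries; this number is odd exactly when $k$ is even, exactly when the $T$-eigenvalue $(-1)^k$ equals $1$ --- which is precisely the asserted dictionary between even eigenvectors of $A$ and the eigenvalue $1$ of $T$. (One could alternatively read the sign $(-1)^k$ off by comparing leading coefficients on the two sides of $Tv_k=\pm v_k$.)

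The step I expect to require the most care is verifying that along the sequence $(\Sigma)_n$ the Dehn--Sommerville symmetry $\psi_n(t)+(-1)^k\psi_n(-1-t)=0$ really does become the homogeneous eigen-relation $T f((\Sigma)_n)=(-1)^k f((\Sigma)_n)$ in $\mathbb{R}^{d+1}$: this relies on the normalization $f_{-1}=1$ of the $f$-function interacting with the Euler characteristic constraint $\chi=1+(-1)^k$, and it is exactly this interplay that fixes the dimension-dependent sign built into $T$. A purely algebraic alternative is to prove $AT=TA$ outright --- equivalently a reciprocity identity for the ordered Bell polynomials $F_n(t)=\sum_s s!\,S(n,s)\,t^s$ under $t\mapsto-1-t$ --- which would make the lemma immediate from the one-dimensionality of the $A$-eigenspaces; establishing that identity (or, in the language of the Proposition, that $A$ preserves \emph{both} $T$-eigenspaces and not only the Dehn--Sommerville one) is where I expect the real work to be.
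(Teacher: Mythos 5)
Your argument takes a genuinely different route from the paper's. The paper's entire proof is the one\-/line assertion that $T$ and $A$ commute, so that (the eigenvalues $k!$ being distinct) the one\-/dimensional $A$-eigenspaces are automatically $T$-invariant; the commutation itself is never verified, and the paper never checks the stated dictionary between the number of nonzero entries and the sign of the $T$-eigenvalue. You instead derive the $T$-invariance of each eigenline geometrically: $v_k$ is the dominant eigenvector of the leading $(k+1)\times(k+1)$ block, hence the normalized limit of $f$-vectors of Barycentric refinements of a $k$-sphere; each refinement is Dehn--Sommerville by the Proposition, and the symmetry survives the limit. Your downward induction showing $(v_k)_0,\dots,(v_k)_k>0$ is correct and supplies the support count $k+1$ that the paper omits entirely. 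This buys a proof that actually leans on the geometry already established, at the cost of importing the Proposition and a limit argument where the paper wants pure linear algebra.

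The soft spot you flag at the end is real, and in fact sharper than you suggest. With the paper's literal identification $[a_0,\dots,a_d]\mapsto a_0+a_1t+\cdots+a_dt^d$ and $T(p)(x)=p(-1-x)$, the operators $T$ and $A$ do \emph{not} commute: for $d=1$ one has $A=\left(\begin{smallmatrix}1&1\\0&2\end{smallmatrix}\right)$, $T=\left(\begin{smallmatrix}1&-1\\0&-1\end{smallmatrix}\right)$, and $AT\neq TA$; the $A$-eigenvector $(1,1)$ is not a $T$-eigenvector in this realization. The commutation (and hence the lemma) holds only for the involution induced on $f$-vectors by $f_G(t)\mapsto f_G(-1-t)$ acting on degree-$(d+1)$ polynomials with the constant slot $f_{-1}=1$ carried along. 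Your limit argument handles this correctly in substance --- the inhomogeneous constant in the Dehn--Sommerville relation is an $O(1)$ term killed by dividing by $\lambda_k^n\to\infty$ --- but the sign you write is off: the paper's symmetry $f(t)+(-1)^df(-1-t)=0$ gives $Tf((\Sigma)_n)=(-1)^{k+1}f((\Sigma)_n)$, not $(-1)^k$, which flips your even/odd dictionary at the end. Since the paper itself never pins down $T$ as a matrix unambiguously (its own sign convention in the lemma is inconsistent with the $d=1$ computation above), treat this as a bookkeeping discrepancy to be fixed by declaring explicitly which realization of $T$ you use; with that fixed, your proof is complete and arguably more self-contained than the paper's.
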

\begin{proof}
As the linear operators $T$ and the Barycentric operation $A$ commute. They therefore
have the same eigenbasis. 
\end{proof} 

\paragraph{}
We know from linear algebra that the eigenvectors $V_k$ of $A^T$ and
the eigenvectors $W_k$ of $A$ have the property that $V_k W_l = c_{kl} \delta_{k,l}$ 
meaning that if they are normalized, then they define dual coordinate systems. 
We think about eigenvectors of $A^T$ as functionals. Functionals in the even eigenspace of $T$
are zero on even functions etc. This gives us convenient Dehn-Sommerville invariants: 

\begin{coro}
For even $d$, the even eigenvectors of $A^T$ and
for odd $d$, the odd eigenvectors of $A^T$ define functionals 
which are zero on the class $\mathcal{X}_d$.
\end{coro}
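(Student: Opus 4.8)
The plan is to reduce the statement to the reflection symmetry of Theorem~2, push it through the common eigenbasis furnished by the Lemma, and then read off the vanishing from the duality between the eigenvectors of $A$ and those of $A^T$. The argument is short once the inputs are aligned; the only thing that needs real care is the parity bookkeeping at the end.

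First I would check that $\mathcal{X}_d$ consists of Dehn--Sommerville complexes. This is immediate from the inductive definition of $\mathcal{X}_d$ together with the criterion already used in the proof of the Barycentric-refinement proposition: $G$ is Dehn--Sommerville if and only if $\chi(G)=1+(-1)^d$ and every unit sphere $S(x)$ is Dehn--Sommerville. Indeed $\mathcal{X}_{(-1)}=\{\{\}\}$ is trivially Dehn--Sommerville, and if everything in $\mathcal{X}_{d-1}$ is, then a $G\in\mathcal{X}_d$ has the required Euler characteristic and all unit spheres in $\mathcal{X}_{d-1}$, hence is Dehn--Sommerville by the criterion. Consequently, by Theorem~2, for every $G\in\mathcal{X}_d$ the $f$-function satisfies $f_G(-1-t)=(-1)^{d+1}f_G(t)$; equivalently $f_G$ is an eigenvector of the reflection $T$, with eigenvalue $-1$ when $d$ is even and $+1$ when $d$ is odd. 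In the even/odd terminology introduced before the Lemma, $f_G$ is an odd function for even $d$ and an even function for odd $d$.

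Next I would expand in the shared eigenbasis. By the Lemma, $A$ and $T$ are simultaneously diagonalized by the triangular eigenbasis $W_0,\dots,W_d$, and by the Lemma each $W_k$ is $T$-even or $T$-odd. Applying the Lemma's argument instead to $A^T$ and $T^T$ (which commute since $A$ and $T$ do) shows that the eigenvectors $V_k$ of $A^T$ form a $T^T$-eigenbasis, and pairing the relations $\langle T^{T}V_k,W_\ell\rangle=\langle V_k,TW_\ell\rangle$ with the duality $V_k\cdot W_\ell=c_{kk}\,\delta_{k\ell}$ ($c_{kk}\neq0$) forces $V_k$ to have the same $T$-parity as $W_k$. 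Now the $T$-eigenspace containing $f_G$ is spanned precisely by the $W_k$ of the parity opposite to that carried by $f_G$'s eigenvalue: the odd $W_k$ for even $d$, the even $W_k$ for odd $d$. Writing $f_G=\sum_k a_k W_k$, the eigenvector condition forces $a_k=0$ for every $W_k$ of the complementary parity, whence for any $G\in\mathcal{X}_d$ we get $\phi_{V_k}(G)=V_k\cdot f_G=\sum_\ell a_\ell\,(V_k\cdot W_\ell)=c_{kk}\,a_k=0$ whenever $V_k$ has that complementary parity, i.e. whenever $V_k$ is even (for even $d$) or odd (for odd $d$). That is exactly the assertion.

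The main obstacle is making the parity accounting airtight — in particular keeping straight, as a function of the parity of $d$, which $T$-eigenspace $f_G$ falls into once one accounts for the constant term of the $f$-function; this is why the Euler characteristic functional $\phi_{V_0}=\chi$ behaves asymmetrically, being $\equiv0$ on $\mathcal{X}_d$ exactly when $d$ is odd and $\equiv 1+(-1)^d=2$ when $d$ is even, so that it must sit on the odd side precisely for odd $d$. A clean way to organize this, and a useful consistency check, is an alternative derivation that avoids the explicit eigenbasis: the space $N$ of functionals vanishing on $\mathcal{X}_d$ is $A^T$-invariant, because $G\in\mathcal{X}_d$ implies $G_1\in\mathcal{X}_d$ (the Proposition together with the description of the unit spheres of a Barycentric refinement, by induction on $d$), so that $(A^T\phi)(G)=\phi(G_1)=0$ for every $\phi\in N$; and an $A^T$-invariant subspace, $A^T$ having simple spectrum, is automatically a span of a subset of the $V_k$. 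It then only remains to identify that subset, which is once more the parity computation above, pinned down by Theorem~2 and the Lemma.
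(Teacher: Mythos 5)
Your overall strategy is the paper's own: reduce to the reflection symmetry of Theorem~2, use the common eigenbasis from the Lemma, and kill the complementary coefficients via the duality $V_k\cdot W_\ell=c_{k\ell}\delta_{k\ell}$. Your first step (that $\mathcal{X}_d$ consists of Dehn--Sommerville complexes, by induction through the Gauss--Bonnet criterion) and your alternative derivation via $A^T$-invariance of the annihilator are both genuine improvements in explicitness. But there is a gap exactly at the point you flag and then set aside. The expansion $f_G=\sum_k a_k W_k$ is not available: the $W_k$ span the $(d+1)$-dimensional space on which $A$ acts (the $f$-vector, equivalently the non-constant part $p(t)=\sum_k f_k t^{k+1}$), while $f_G=1+p$ has degree $d+1$ and a constant term, and $T$ does not respect this splitting. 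For odd $d$ the constants cancel in $f_G(-1-t)=f_G(t)$, so $p$ itself is an honest $T$-eigenvector and the linear argument goes through. For even $d$ one only gets the affine relation $p(t)+p(-1-t)=-2$, so $p$ is \emph{not} a $T$-eigenvector; restoring linearity (by working modulo constants, or by replacing $p$ with $f_G-f_{G_0}$ for a fixed $G_0\in\mathcal{X}_d$) changes which $W_k$ count as $T$-even, and your deferred ``parity computation'' is precisely where the sign is lost.

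That this is not pedantry is visible already at $d=2$: the eigenvectors of $A^T$ are $[0,0,1]$, $[0,2,-3]$, $[1,-1,1]$, of which $[0,0,1]$ and $[1,-1,1]$ are \emph{even} in the paper's sense (odd number of nonzero entries). Neither vanishes on $\mathcal{X}_2$ (the octahedron has $f_2=8\neq0$ and $\chi=2\neq0$); the functional that does vanish is $2f_1-3f_2$, which is \emph{odd}. Similarly for $d=4$ the vanishing functionals $[0,0,0,-2,5]$ and $[0,-22,33,-40,45]$ both have an even number of nonzero entries. So the even-$d$ half of the statement cannot be proved as written; your argument reproduces the stated parity rather than the true one because you expand $f_G$ as if it lay in the span of the $W_k$ and transfer the count-parity from $W_k$ to $V_k$ without checking it ($W_k$ has $k+1$ and $V_k$ has $d+1-k$ nonzero entries, so the two counts agree in parity only when $d$ is even). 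The odd-$d$ half of your argument, and of the corollary, is fine. A robust repair of the even-$d$ case: show $\phi_V$ is \emph{constant} on $\mathcal{X}_d$ by applying it to $f_G-f_{G_0}$, which is a genuine $T$-eigenvector of eigenvalue $(-1)^{d+1}$, and then evaluate once on a cross-polytope to identify the constant; carried out honestly, this yields the odd rather than the even eigenvectors of $A^T$ for even $d$.
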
 

\paragraph{}
This was Theorem~(1) in \cite{valuation}, where already the idea of proving
Dehn-Sommerville via curvature has appeared and multi-variate versions of 
Dehn-Sommerville were given, answering a open problem of Gruenbaum \cite{Gruenbaum1970} from 1970. 
The current approach is much simpler. In multi-dimensions, the Dehn-Sommerville symmetry
just has to hold for each of the variables appearing in the simplex generating function 
$f(t_1, \cdots ,f_m)$. The proof in higher dimensions is identical using Gauss-Bonnet. 

\paragraph{}
For the next part, we assume that $G$ can be realized as a graph like if $G$ is the Barycentric
refinement of an arbitrary complex. 
An {\bf edge refinement} of a graph cuts an edge $e=(a,b)$ into two by adding a new vertex $c$ in the
middle and connecting the new vertex to the intersection of spheres at $a$ and $b$.
More formally, we remove the edge $(a,b)$, and adding new edges $(a,c),(c,b)$
as well as $\{ (c,z) \; | \; z \in S(a) \cap S(b) \}$. Edge refinements preserve 
discrete manifolds. More generally:

\begin{propo}
If $G$ is in $\mathcal{X}_d$ and $e$ is an edge in $G$, then the edge refinement is in $\mathcal{X}_d$. 
\end{propo}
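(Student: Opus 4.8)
The plan is to induct on the dimension $d$. When $d\le 0$ every complex in $\mathcal{X}_d$ has no edges ($\mathcal{X}_0=\{S^0\}$ and $\mathcal{X}_{-1}=\{\{\}\}$), so the statement is vacuous and the induction is anchored. So assume $d\ge 1$, let $G\in\mathcal{X}_d$, let $e=\{a,b\}$ be an edge, and write $L=S(a)\cap S(b)$ for the link of $e$. Since $L$ is the unit sphere $S_{S(a)}(b)$ of $b$ inside $S(a)$ and $S(a)\in\mathcal{X}_{d-1}$, we get $L\in\mathcal{X}_{d-2}$. Let $G'$ be the edge refinement, with new vertex $c$. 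The first step is to pin down the local structure of $G'$: the simplices that are deleted are exactly those containing $\{a,b\}$, the simplices that are created are exactly those containing $c$, and because in $G'$ the vertices $a$ and $b$ are non-adjacent while both are adjacent to all of $L$ (and to nothing else new), the unit sphere of $c$ is the join $S_{G'}(c)=S^0+L$ with $S^0=\{\{a\},\{b\}\}$.

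Next I would verify the two clauses of the definition of $\mathcal{X}_d$ for $G'$. For the Euler characteristic: edge refinement is a subdivision, so $\chi$ is unchanged; explicitly, summing $(-1)^{\dim}$ over the deleted simplices $\{a,b\}\cup\sigma$ and over the created simplices $\{c\}\cup\tau$, $\{c,a\}\cup\tau$, $\{c,b\}\cup\tau$ (with $\sigma,\tau$ ranging over simplices of $L$ together with the empty set) gives $\chi(L)-1$ in both cases, so $\chi(G')=\chi(G)=1+(-1)^d$. For the unit spheres I would split the vertices $v$ of $G'$ into four types. If $v=c$, then $S_{G'}(c)=S^0+L$ with $L\in\mathcal{X}_{d-2}$, and I need $S^0+L\in\mathcal{X}_{d-1}$; this is closure of $\mathcal{X}$ under suspension, a special case of closure under join with dimensions adding, which is itself a short induction using $f_{G+H}=f_Gf_H$ for the Euler-characteristic clause and the identity $S_{G+H}(x)=S_G(x)+H$ for the inductive clause. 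If $v=a$ (symmetrically $v=b$): inside $S_{G'}(a)$ the new vertex $c$ has exactly the neighbourhood $L$ that $b$ had in $S_G(a)$, and no other adjacency among the neighbours of $a$ changes, so $S_{G'}(a)\cong S_G(a)\in\mathcal{X}_{d-1}$ via the relabelling $b\mapsto c$. If $v\in L$: then $\{a,b\}$ is an edge of $H:=S_G(v)$, its link in $H$ equals $S(a)\cap S(b)\cap S(v)$, and $S_{G'}(v)$ is obtained from $H$ by deleting $\{a,b\}$ and adjoining a vertex joined to $a$, $b$ and that link — that is, $S_{G'}(v)$ is exactly the edge refinement of $H=S_G(v)\in\mathcal{X}_{d-1}$ at $\{a,b\}$, hence lies in $\mathcal{X}_{d-1}$ by the inductive hypothesis in dimension $d-1$. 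Finally, if $v$ is none of $a,b$ and is not adjacent to both $a$ and $b$, then neither the deleted edge nor the new vertex meets the neighbourhood of $v$, so $S_{G'}(v)=S_G(v)\in\mathcal{X}_{d-1}$. Combining, $G'\in\mathcal{X}_d$.

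I expect the main obstacle to be the bookkeeping in the case $v\in L$: one has to check that the modification forced on the unit sphere of an old vertex of the link is, on the nose, the formal edge-refinement move (same deleted edge, same link, same new adjacencies), because this is precisely what feeds the induction. The Euler-characteristic alternating sums, both here and in the auxiliary suspension/join lemma, are routine once the convention $f_{-1}=1$ is kept explicit, and the remaining vertex types are direct from the definition of the unit sphere in $G'$.
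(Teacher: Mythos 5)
Your proof is correct, but it takes a genuinely different route from the paper's. The paper argues globally through the $f$-function: it computes the net change that edge refinement inflicts on $f_G(t)$, namely a multiple of $(t+t^2)$ times the $f$-data of the link $L=S(a)\cap S(b)$, observes that the polynomials obeying the reflection symmetry $f(-1-t)=\pm f(t)$ form a linear space, and concludes because $L\in\mathcal{X}_{d-2}$ makes the added part symmetric of the right parity (the factor $t+t^2$ being invariant under $t\mapsto -1-t$). That is a three-line computation, but strictly speaking it only establishes that $f_{G'}$ satisfies the Dehn--Sommerville symmetry, not the stated conclusion $G'\in\mathcal{X}_d$ --- and the paper itself stresses elsewhere that the symmetry of $f$ does not imply membership in $\mathcal{X}_d$ (the ``Dehn--Sommerville non-flat'' examples). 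Your argument instead verifies the inductive definition of $\mathcal{X}_d$ head-on: $\chi$ is preserved, and the four vertex types $c$, $\{a,b\}$, $v\in L$, and the rest have unit spheres equal to $S^0+L$, a relabelled $S_G(a)$, the edge refinement of $S_G(v)$ (feeding the induction on $d$), and the unchanged $S_G(v)$ respectively. This is longer and needs the auxiliary closure of $\mathcal{X}$ under suspension, but it buys the literal statement --- closure of the class $\mathcal{X}_d$ under edge refinement, which is what one needs in order to iterate the operation --- whereas the paper's computation would have to be supplemented by exactly your local analysis to reach that conclusion. Your local bookkeeping is also the more careful of the two: the net change to the $f$-function is $(t+t^2)f_L(t)$ once the deleted simplices $\{a,b\}\cup\sigma$ are subtracted, which your case analysis handles implicitly.
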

\begin{proof}
The effect of the operation on the $f$-vector can be split into two parts.
The first one is to increase $f_0$ and $f_1$ by $1$ (which means adding $t+t^2$ to $f_G(t)$.
Then we add $t f_{S(a) \cap S(b)} + 2 t^2 f_{S(a) \cap S(b)}$, because every
$k$-simplex in $S(a) \cap S(b)$ defines a new $(k+1)$-simplex connecting to $c$ and
two new $(k+2)$-simplices connecting $S(a) \cap S(b)$ to $(a,c)$ and $(b,c)$.
Now, the set of functions satisfying the Dehn-Sommerville symmetry form a linear
space. The claim follows as the added part 
$t+t^2 +t f_{S(a) \cap S(b)} + 2 t^2 f_{S(a) \cap S(b)}$
satisfy the Dehn-Sommerville symmetry by induction because the space $S(a) \cap S(b)$ 
is in $\mathcal{X}_{d-2}$  if $G \in \mathcal{X}_d$. 
\end{proof}

\section{Remarks}

\paragraph{}
For Dehn-Sommerville, see chapter nine in \cite{gruenbaum} for convex polytopes.
It is also covered in \cite{BergerLadder} where we read: 
{\it these relations had already been found by Dehn by 1905 for the
dimensions 3,4,5; they were known in all dimensions by Sommerville by 1927 
but were then forgotten until they were rediscovered by Klee in 1963.}. 
The relations have been extended to larger classes of polytopes. An example of
recent work is \cite{BayerBillera}. More literature is
\cite{Klee1964,NovikSwartz,MuraiNovik,LuzonMoron,BrentiWelker,Hetyei,Klain2002}.

\paragraph{}
The {\bf Levitt curvature} for Euler characteristic 
Formula~(\ref{levitt}) appeared in \cite{Levitt1992}. We have rediscovered that formula
$\chi(G) = \sum_x K(x)$ in the introduction to \cite{cherngaussbonnet}, an article which 
focused on geometric graphs (discrete manifolds). It surprises that {\bf higher dimensional curvature 
in the discrete is so elegant}, especially if one compares to the continuum, where one has to refer to
Pfaffians of curvature expressions to get to the general Gauss-Bonnet-Chern theorem 
(see \cite{Cycon}). In the continuum, the {\bf Euler curvature} is not even defined for odd-dimensional manifolds. 
In the continuum, it is zero for odd-dimensional manifolds as we see here again as it is then a special case of
the Dehn-Sommerville equations. 

\paragraph{}
The Taylor expansion of the parametrized Gauss-Bonnet formula at $t=0$ gives
a {\bf generalized handshake formula} $f_k(G) = \sum_{x \in G} V_{k-1}(S(x))/(k+1)$
which by linearity produces Gauss-Bonnet formulas for any discrete
valuation $X(G) = \sum_k X_k f_k(G)$ and especially for Euler characteristic
$\chi(G) = \sum_k (-1)^k f_k(G)$. One can also just define
$f(t) = 1+\sum_{k=0}^{d} X_k f_k t^{k+1}$ and its anti derivative. The
Gauss-Bonnet formula is the same.  For example, for $X(G) = v_1(G)$, where
$f(t) = 1+v_1 t^2$, the curvature is $K(x) = {\rm deg}(v)/2$. Gauss-Bonnet is then 
{\bf Euler handshake formula}, the {\bf fundamental theorem of graph theory}.
More generally, we have for $v_k(G)$ the curvature is $K(x) = f_{k-1}(S(x))/(k+1)$.

\paragraph{}
In \cite{cherngaussbonnet} we first noticed experimentally that the curvature is zero for
odd-dimensional geometric complexes but we could not prove it yet at that time. 
These zero curvature relations were later proven with
discrete integro-geometric methods in \cite{indexexpectation,colorcurvature} by seeing
curvature as an average of Poincar\'e-Hopf indices when integrating over all functions
or colorings. The connection to Dehn-Sommerville emerged especially in the work 
about {\bf Wu characteristic} \cite{valuation}. So, Dehn-Sommerville conditions appeared
for us three times independently: first as a zero curvature condition for odd-dimensional 
discrete manifolds, then as Barycentric invariants (eigenvectors of $A^T$), then as a 
symmetry for the roots of simplex generating function $t \to f_G(t)$. In 
each case, we were unaware of the Dehn-sommerville connection at first. We hope that this
note makes clear how all these concepts (curvature, Barycentric refinement and root symmetry)
are related. 

\paragraph{}
The classical {\bf Dehn-Sommerville valuations} are
$$ X_{k,d} = \sum_{j=k}^{d-1} (-1)^{j+d} \B{j+1}{k+1} v_j(G) + v_k(G)  \; . $$
If the vectors $X_{0,d}, \dots, X_{d-2,d}$ are written as row vectors in a matrix $X_d$, we have
$$ X_2=\left[\begin{array}{cc} 2 & -2 \\ \end{array} \right],
   X_3=\left[\begin{array}{ccc} 0 & 2 & -3 \\ 0 & 2 & -3 \\ \end{array} \right], 
   X_4=\left[\begin{array}{cccc} 2 & -2 & 3 & -4 \\ 0 & 0 & 3 & -6 \\ 0 & 0 & 2 & -4 \\ \end{array} \right] \; . $$

We have mentioned  before (like \cite{AmazingWorld} that the curvature of 
$X_{k,d}$ is $K(x) = X_{k-1,d-1}(S(x))$. But it is
less obvious there. The reason is the combinatorial identity
$$   X_{k+1,d+1}(l+1)/(l+1) = X(k,d)(l)/(k+2)  \; .  $$
But it also implies that the Dehn-Sommerville curvatures are all zero for a geometric graph.
Use Gauss-Bonnet and induction using the fact that the unit sphere of a geometric
graph is geometric and that for $d=1$, a geometric graph is a cyclic graph $C_n$
with $n \geq 4$. For such a graph, the Dehn-Sommerville valuations are zero.

\paragraph{}
Gauss-Bonnet and Dehn-Sommerville can be generalized to multi-valuate valuations 
like {\bf Wu characteristic} $\omega(G) = \sum_{x \sim y}  \omega(x) \omega(y)$ with 
$\omega(x)=(-1)^{{\rm dim}(x)}$.  The Wu characteristic is then $1-f(-1,-1)$ where 
$$ f(t,s) = 1+\sum_{k,l} f_{kl}(G) t^{k+1} s^{l+1} $$
is the {\bf multivariate simplex generating function}. Here,
$f_{kl}(G)$ is the {\bf $f$-matrix}, counting the number of intersecting $k$-dimensional 
and $l$-dimensional simplices.

\paragraph{}
The curvature of Wu characteristic is then $F_G(t,s) = \int_0^t f(r,s) \; dr$. 
Gauss-Bonnet reads
$$ f_G(t,s) = \sum_{x \in G} F_{S(x)}(t,s)  \;  $$
and especially $\omega(G) = \sum_{x \in G} K(x)$, where $K(x)$ is the Wu curvature.

\paragraph{}
While investigating Barycentric limits \cite{KnillBarycentric2,KnillBarycentric}, 
an other angle to Dehn-Sommerville appeared. 
We first did not see the connection between Barycentric invariants and Dehn-Sommerville. 
The Barycentric refinement operator $A_d$ was first explored empirically by looking at the best 
linear operator implementing the map $f(G) \to f(G_1)$ (brute force data fitting with hundreds
of random graphs) and were surprised that the fitting would lead to an exact formula. 
After getting the formula for $A$ and proving it, we learned that it is ``well known". It 
appears in \cite{Stanley86,LuzonMoron,Hetyei}.

\paragraph{}
The value $g(x,x)=1-\chi((x))$ is the {\bf Green function}, the diagonal entries
of the inverse $g=L^{-1}$ of the unimodular connection matrix $L$ defined as $L(x,y) =1$
if $x \cap y \neq \emptyset$ and $L(x,y)=0$ else. The Green function entries $g(x,y)$
are potential energy values between two simplices $x,y$. We called
$f_G'(-1) = \sum_{x \in G} (1-\chi(S(x))) = {\rm tr}(L-L^{-1})$
the {\bf Hydrogen functional}. 

\paragraph{}
The {\bf energy theorem} assures that the total potential energy $\sum_{x,y} g(x,y)$ 
is the Euler characteristic $\chi(G)$, which is defined as the {\bf super trace}
${\rm str}(L)=\sum_x \omega(x) L(x,x)$ and agrees with
the super trace $\sum_x \omega(x) g(x,x)$ of $g=L^{-1}$. 
The entries $\omega(x) L(x,x)=(-1)^{{\rm dim}(x0)}$
and $\omega(x) g(x,x)$ are integers, the Poincar\'e-Hopf indices \cite{poincarehopf}
of the function $h(x)={\rm dim}(x)$ or $h(x)=-{\rm dim}(x)$ which are 
{\bf colorings} of the graph $\Gamma(G)$.

\paragraph{}
From the energy theorem and Gauss Bonnet we can express $d/dt \log(f_G(t))$ at $t=-1$
through the connection operator $L$. Let $E$ be the matrix which has everywhere $1$.
$\frac{d}{dt} \log(f(t))_{t=-1} = {\rm tr}(L^{-1})/{\rm Tr}(L^{-1}) E)$.
Proof: From Gauss Bonnet, we have
$\frac{d}{dt} \log(f_G(t)) = \frac{f_G'}{f_G} = \sum_x \frac{f_{S(x)}(t)}{f_G(t)}$.
For $t=-1$, we have $\sum_x \chi(S(x))/\chi(G)  = tr(L^{-1})/\chi(G) = {\rm tr}(L^{-1})/{\rm Tr}(L^{-1} E)$.

\paragraph{}
The involutive symmetry $T(f) = \pm f(-1-t)$ given by the Dehn-Sommerville condition
implies {\bf root pairing} for $f$. This article has started
with such an observation. We noticed that for even-dimensional spheres, there is always root with
${\rm Re}(t)=-1/2$ and that the roots are reflection symmetric with respect to $t=-1/2$. 
A simplicial complex is defined to be a {\bf $d$-sphere} if every unit sphere is a
$(d-1)$-sphere and removing one vertex renders the complex contractible.
This inductive definition is primed by the empty complex $0$ being the
$(-1)$-sphere. There are various operations which preserve d-spheres.
We observe that for spheres the roots of $f$ pair up to $-1$ in the odd-dimensional
case and do so also in the even-dimensional case with the remaining roots.

\paragraph{}
There are simplicial complexes outside $\mathcal{X}$ which are Dehn-Sommerville. Similarly 
as zero Euler curvature implies zero Euler characteristic for even-dimensional manifolds,
zero Euler characteristic does not necessarily mean zero curvature. 
The zero Dehn-Sommerville curvature condition is {\bf sufficient} for the complex to be 
Dehn-Sommerville, but it is {\bf not necessary}. There are complexes
which are Dehn-Sommerville, but which are {\bf not Dehn-Sommerville flat}. We give an example
in the illustration section. 

\section{Examples}

\paragraph{}
{\bf Examples.} For $d=3$, we have $A=\left[ \begin{array}{cccc} 1 & 1 & 1 & 1 \\
 0 & 2 & 6 & 14 \\ 0 & 0 & 6 & 36 \\ 0 & 0 & 0 & 24 \\ \end{array} \right]$.
The eigenvectors are $[0, 0, 0, 1]^T, [0, 0, -1, 2]^T, [0, 22, -33, 40]^T, [-1, 1, -1, 1]^T$.
The eigenvector $[-1,1,-1,1]$ to the eigenvalue $1$ is the Euler characteristic. The second
and last eigenvector leads to the Dehn-Somerville invariants $f_2-2f_3=0$ and $f_0-f_1+f_2-f_3=0$.  \\
For $d=4$, we have $A=\left[ \begin{array}{ccccc} 1 & 1 & 1 & 1 & 1 \\ 0 & 2 & 6 & 14 & 30 \\
0 & 0 & 6 & 36 & 150 \\ 0 & 0 & 0 & 24 & 240 \\ 0 & 0 & 0 & 0 & 120 \\ \end{array} \right]$. From
the eigenvectors $[0, 0, 0, 0, 1]$, $[0, 0, 0, -2, 5]$, $[0, 0, 19, -38, 55]$, $[0, -22, 33, -40, 45]$,
$[1, -1, 1, -1, 1]$, the second and last produce Dehn-Sommerville invariants:
$2 v_3-5v_4=-0$ and $22 v_2-33 v_3-40 v_4+45 v_5=0$.

\paragraph{}
For $d=0$, where $h=-1+t+v_0$, the condition is $v_0=2$ implying that the zero-dimensional
graph must have 2 vertices. \\
For $d=1$, where $f=(v_0,v_1)$ gives the number of vertices,
edges and triangles, then $f=1+v_0 t + v_1 t^2$ and $h=(1-v_0+v_1)  + (v_0-2) t + t^2$.
The Dehn-Sommerville condition is $v_0=v_1$ meaning $\chi(G)=0$. Note that we can attach hairs
and even arbitrary trees to a circular graph and still have $v_0=v_1$ satisfied. This shows, that
at least in one dimension, the Dehn-Sommerville relations can hold for a larger class of complexes
than $\mathcal{X}$ to be defined below. \\
For $d=2$, the conditions give $v_1=3v_0-6,v_2=-4+2v_0$. This is equivalent to
$v_0-v_1+v_2=2, 2v_1=3v_2$. This means that Euler characteristic is $2$ and that every edge meets two
triangles. \\
For $d=3$, the condition is equivalent to $\chi(G)=v_0-v_1+v_2-v_3=0$
and $22v_1-33v_2+40v_3=0$. We will see in a moment how to get the more intuitive
Barycentric expressions through eigenvectors of the Barycentric refinement operators.

\paragraph{}
The root pairing property was already mentioned in \cite{averagedimension}. We found this while investigating
the statistics of the simplex cardinality distribution in simplicial complexes. 
The {\bf root pairing statement} is obviously true for $0$-dimensional spheres. If a zero dimensional complex
has $n$ points, then the generating function of the $f$-vector is $1+nt$. This has a root $-1/2$
if and only the complex has exactly $n=2$ point, which means that $G$ has to be a $0$-sphere. 
Let us also mention the $(-1)$-dimensional complex which is the empty complex. In that case, 
the function is $f=1$ which has no roots. Root pairing still works, there are just no pairs. 

\paragraph{}
For $1$-dimensional complexes with $n$ vertices and $m$ edges, we have the generating
function $1+n t + m t^2$. The Euler characteristic is $n-m = \chi(G)$. The roots
are $-n \pm \sqrt{-4m + n^2})/(2m)$. The sum of the roots is $-n/m$. This is $-1$ 
if and only if $n=m$, meaning that we need $\chi(G)=0$. Beside circular complexes, there are
many complexes like {\bf sun graphs} for which $n=m$. We can attach arbitrary trees to the
circular graph for example and still have the property. There is a sphere complex which is
not the Whitney complex of a graph, which is $G=\{ \{1,2\},\{2,3\},\{3,1\},\{1\},\{2\},\{3\} \}$
where $n=3,m=3$ and where the roots become complex. We see confirmed here that roots are
not real if and only if the complex is not the Whitney complex of a graph. 

\paragraph{}
For $2$-dimensional complexes the simplex generating function is 
$1+n t+m t^2+l t^3$, in order to have a root $-1/2$, we need 
$n=(8-l+2m)/4$. For a $2$-manifold, we have $2m=3l$.
The two equations give $m=3n-6, l=2n-4$ implying $\chi(G)=n-m+l=2$. 
Actually, for two dimensional complexes, the two equations $2m=3l, \chi(G)=n-m+l=2$ 
imply that $f(-1/2)=0$. This in particular holds for two disjoint copies of the
projective plane. 


\pagebreak

\section{Illustrations}

\begin{figure}[!htpb]
\scalebox{0.971}{\includegraphics{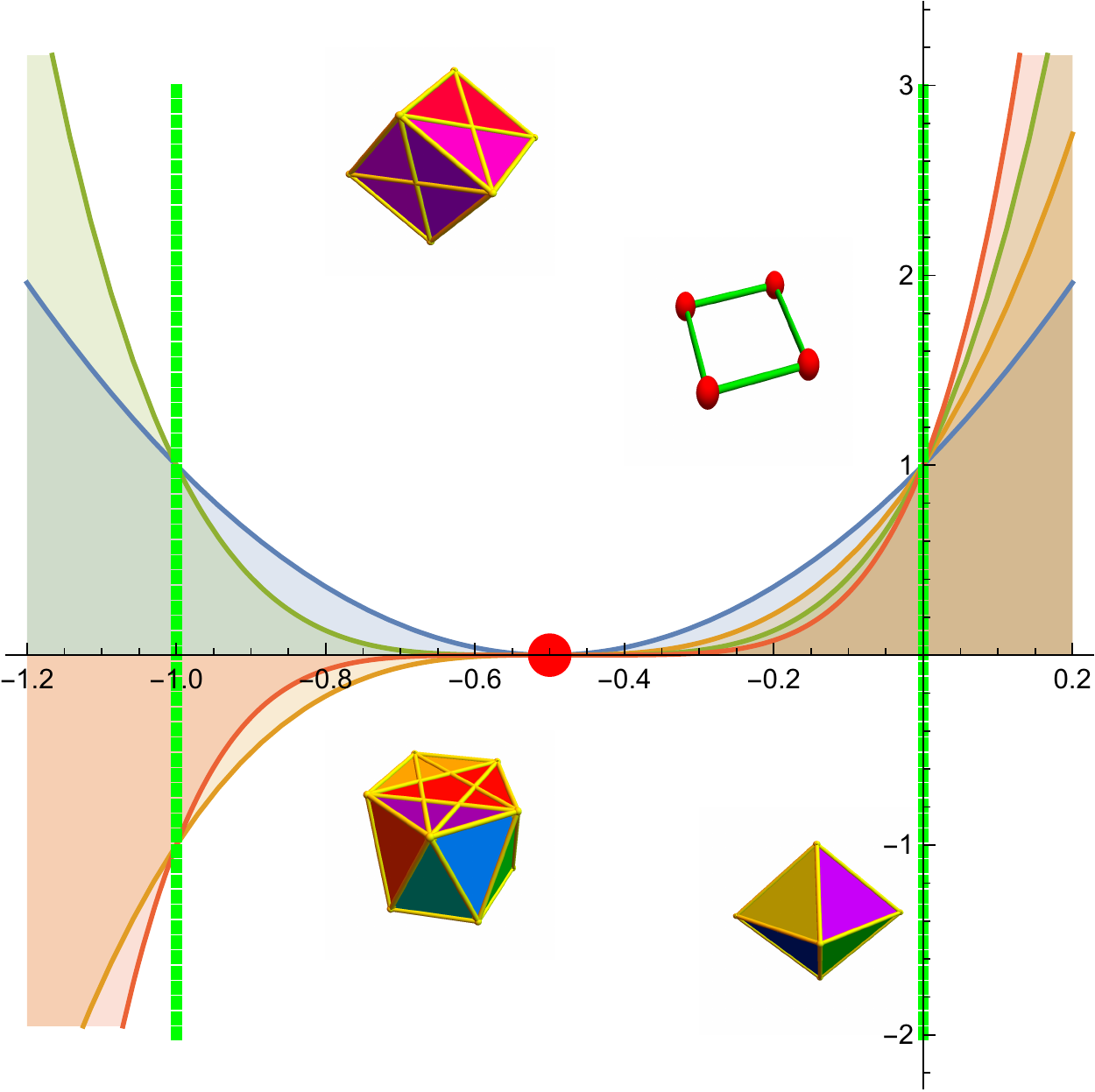}}
\label{spheres}
\caption{
The functions $f_G$ for the smallest spheres $S^1=C_4=S^0+S^0$, $S^2 = C_4+S^0=S^0+S^0+S^0$
(the octahedron), $S^3 = S^2 + S^0 = S^1 + S^2$ (the three sphere), $S^4=S^4 + S^0 = 5*S^0$
(the four sphere), which are all cross polytopes. The index generating function $f_G(t)$ of $G=S^0$ is $1+2t$.
so that $f_{S^d}(t) = (1+2t)^{d+1}$. We then observed experimentally that all spheres satisfy the 
symmetry $f(x)+(-1)^d f(-1-x)=0$, then linked it to Dehn-Sommerville. 
}
\end{figure}

\begin{figure}[!htpb]
\scalebox{0.971}{\includegraphics{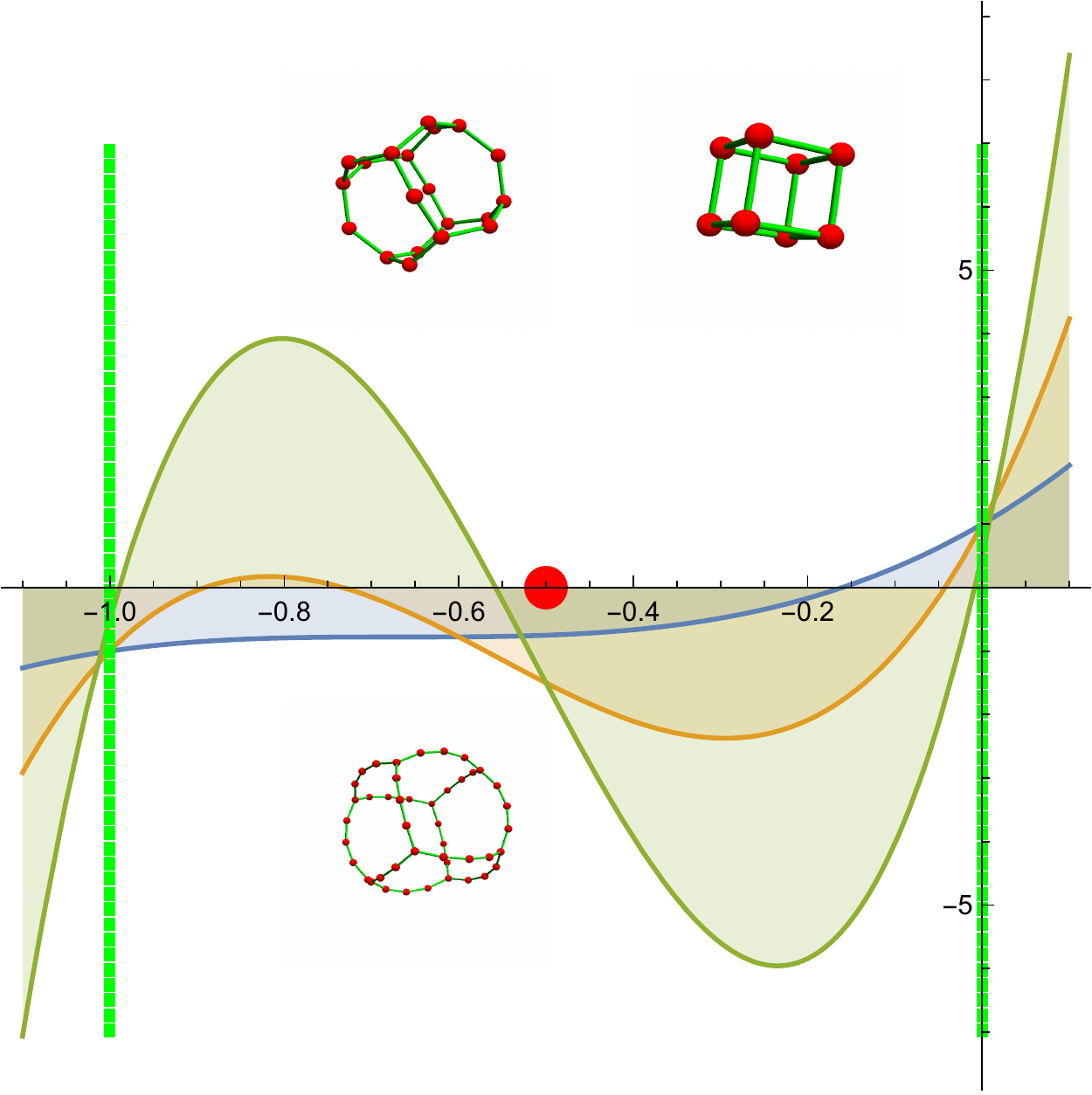}}
\label{cube}
\caption{
If the faces are included to a cube graph we get a {\bf CW-complex} which models a discrete 2-sphere.
Its generating function is $f_G(x)=1 + 8 x + 12 x^2 + 6 x^3$. It does not satisfy Dehn-Sommerville. 
It also has non-real roots. After Barycentric refinements however, the roots become real. 
We see $f_{G_1}(x)=1 + 26 x + 60 x^2 + 36 x^3$ and $f_{G_2}(x)=1+122 x + 336 x^2 + 216 x^3$ (we 
plotted $f_{G_2}/2$). The coefficients $[122,336,216]$ are already aligned quite well with the
Perron-Frobenius eigenvector $[1,3,2]^T$ to the Barycentric refinement operator 
$A_2$ in dimension
$2$ which defines a function having only real roots. In general we see that the Perron-Frobenius
functions $a_1 t + \cdots + a_n t^{d+1}$ for the Perron-Frobenius eigenvector to the $(d+1) \times (d+1)$
matrix $A_d$ always has only real roots. It looks like a simple calculus/linear algebra problem, but we
can not prove this yet. It would imply that for sufficiently large Barycentric refinement of any
CW complex and especially simplicial complexes, the roots of $f_{G_n}$ are real for large enough $n$. 
}
\end{figure}

\begin{figure}[!htpb]
\scalebox{0.971}{\includegraphics{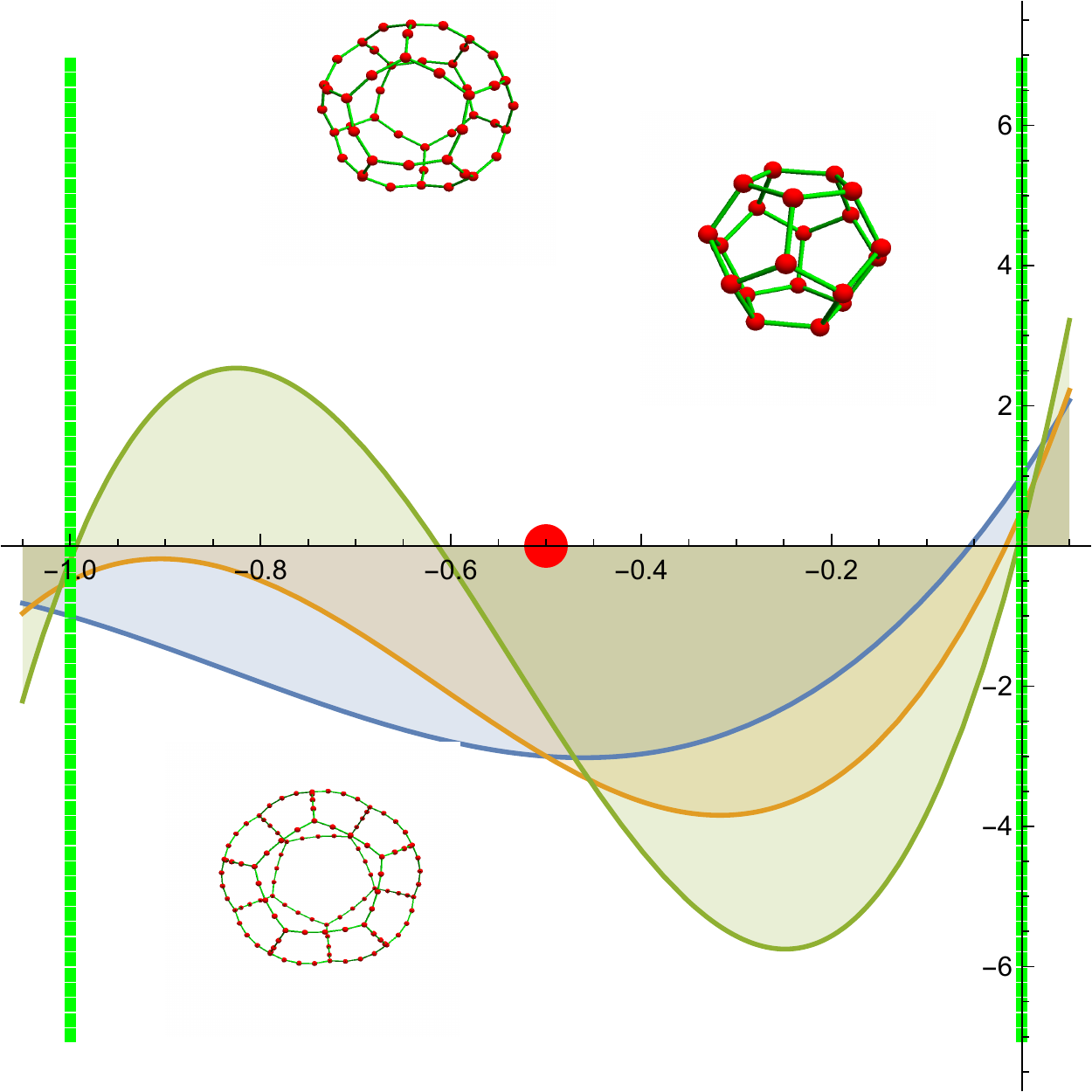}}
\label{cube}
\caption{
Also the {\bf dodecahedron} (when seen as a CW-sphere and not a 1-dimensional graph, which it is when
seen as a simplicial complex), has non-real roots for $f_G(t) = 1+20t+30t^2+12t^3$. But here also $f_{G_1}$ has non-real
roots. Only $f_{G_2}$ for the second Barycentric refinement $G_2$ starts to have real roots. As the Perron-Frobenius eigenvector produces a 
function $f$ which satisfies the Dehn-Sommerville symmetry, we get roots for $f_{G_n}$ which are more and more symmetric. 
Also this was just observed experimentally at first. The linear algebra of the eigenvectors of the 
Barycentric refinement operators $A_d$ explains this. Indeed, as we show here, Dehn-Sommerville
for complexes of the type $\mathcal{X}_d$ is a manifestation for a symmetry one has in the 
Barycentric limit. Since the involutary symmetry (duality) in the limit has eigenvalues $1$
or $-1$, only half of the Barycentric invariants are Dehn-Sommerville invariants.  }
\end{figure}

\begin{figure}[!htpb]
\scalebox{0.971}{\includegraphics{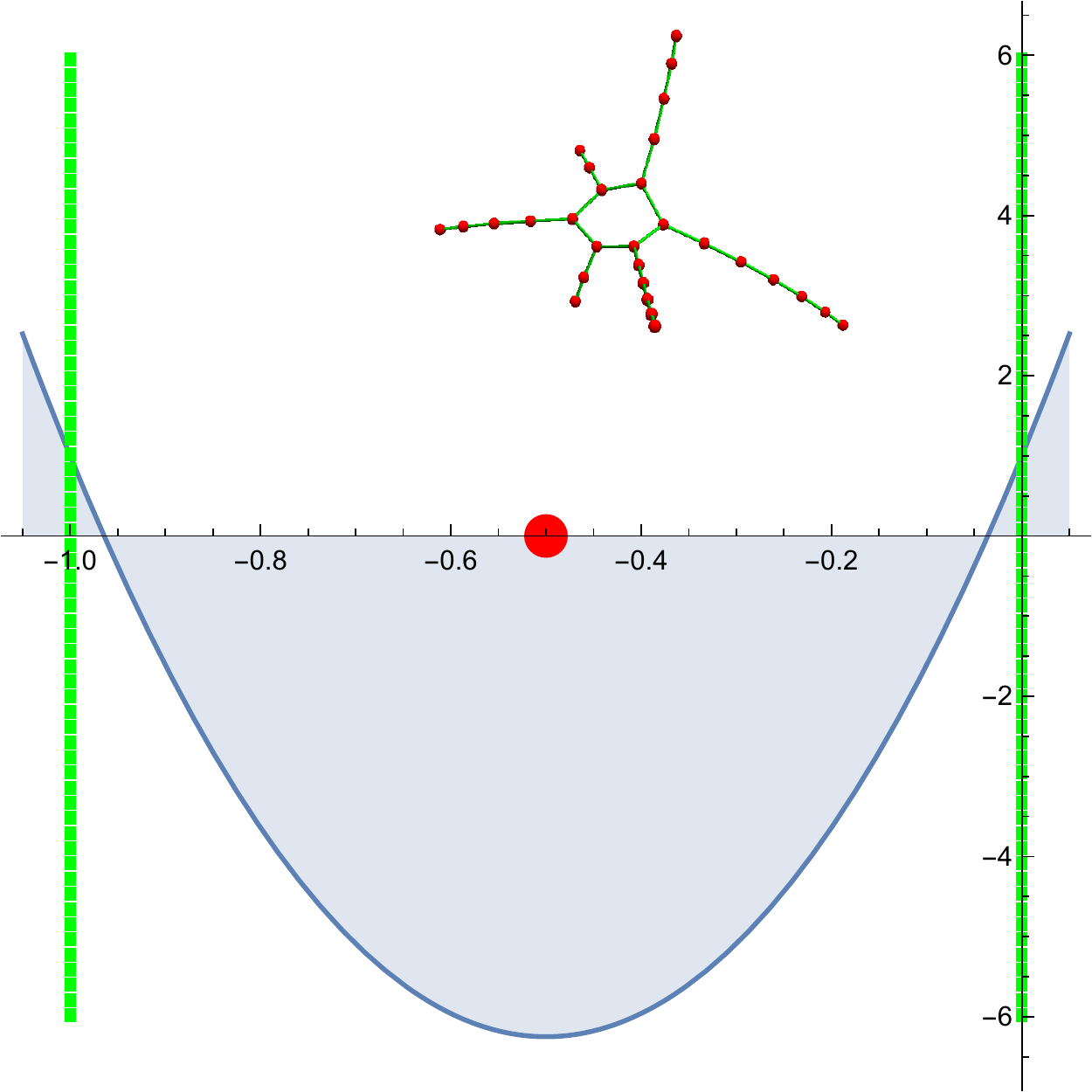}}
\label{sun}
\caption{
{\bf Sun graphs} are $1$-dimensional complexes which satisfy Dehn-Sommerville, even-so they 
are only varieties, not manifolds. As they have the same number of vertices than edges, we have 
$f(t) = 1+n t + n t^2$ which satisfies $f(-1-t)=f(t)$. It is an example of a $1$-variety. 
A $d$-variety is a complex $G$ for which all unit spheres
are $d-1$ varieties. Like manifolds, it starts with the induction that the empty complex
is a $-1$ variety but unlike for manifolds, we do not insist that unit spheres are 
$(d-1)$-spheres. In this example, $f_G(t) = 1 + 29t + 29t^2$. The roots $-1/2 \pm  \sqrt{25/116}$ 
are symmetric with respect to ${\rm Re}(t)=-1/2$. We have Dehn-Sommerville symmetry. 
}
\end{figure}

\begin{figure}[!htpb]
\scalebox{0.971}{\includegraphics{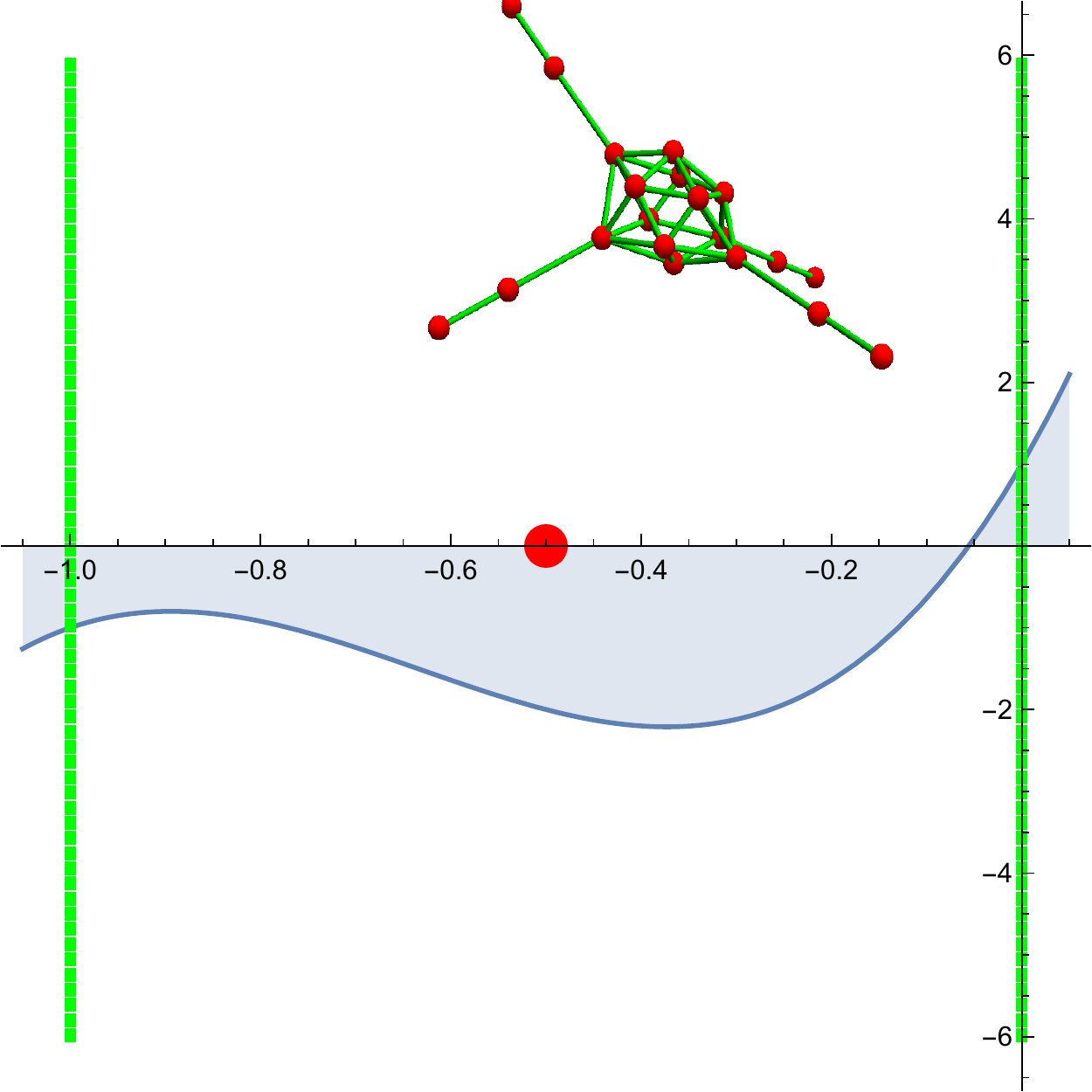}}
\label{hairs}
\caption{
When adding hairs to a $2$-sphere, the Dehn-Sommerville property gets destroyed. 
The complex $G$ shown here is a simplicial complex but it is not pure. Its inductive
dimension is $47/30=1.56667$. Its average simplex cardinality $f_G'(1)/f_G(1)$ is
$156/79=1.97468...$ for the function $f_G(t) = 1 + 20t + 38t^2 + 20t^3$. The function 
$f_G$ does not honor the Dehn-Sommerville symmetry $f(t)=\pm f(-1-t)$. We have
$f(-1-t) = -1-4t-22t^2-20t^3$.  What happens is that the unit spheres do not satisfy
Dehn-Sommerville. There are unit spheres which are a disjoint union of a 1-sphere and
a point which does not satisfy Dehn-Sommerville. This means that the Dehn-Sommerville
curvatures are not zero. The complex is not Dehn-Sommerville flat. 
}
\end{figure}

\begin{figure}[!htpb]
\scalebox{0.971}{\includegraphics{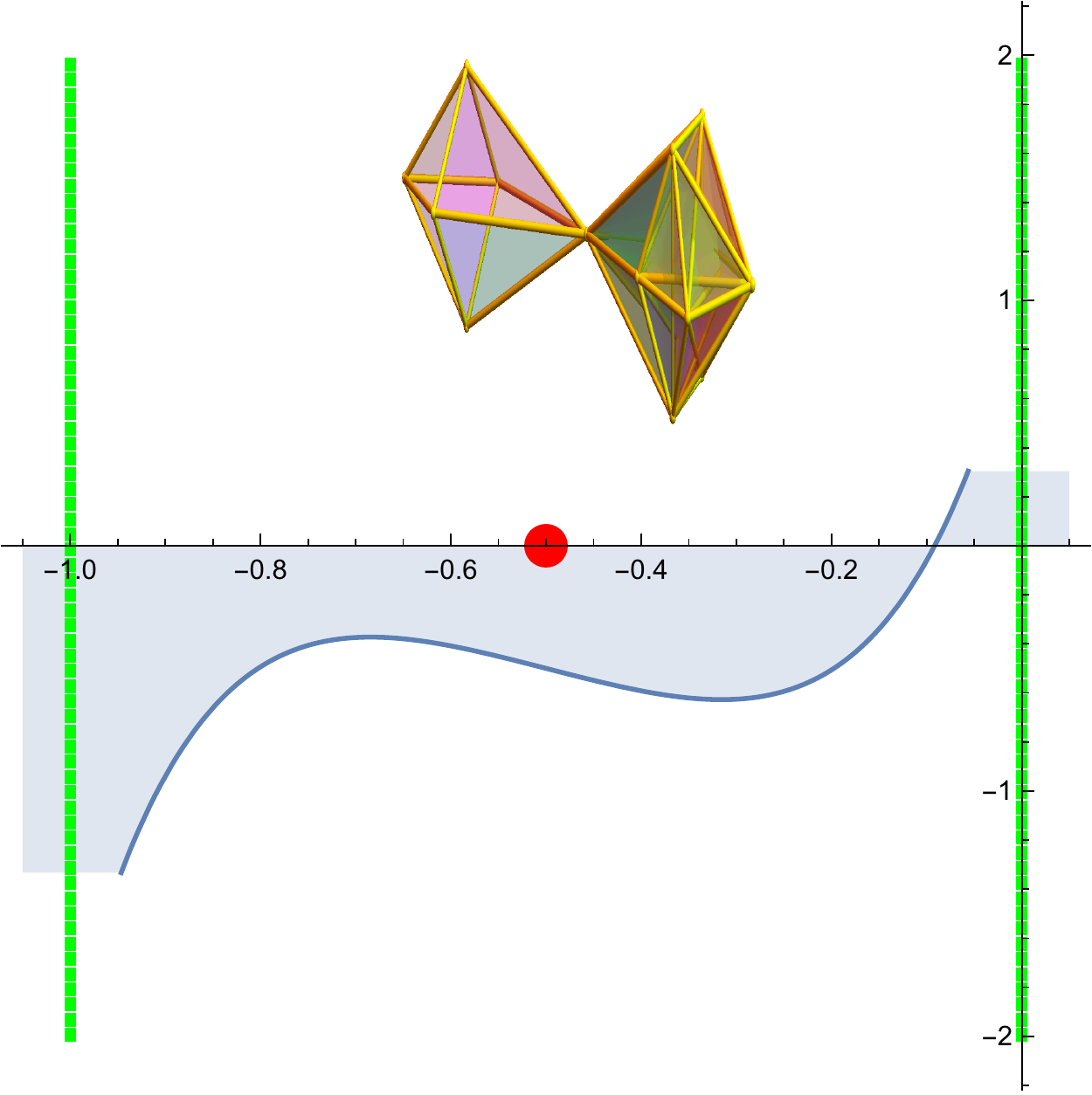}}
\label{connectedsum}
\caption{
The Dehn-Sommerville property gets distroyed with disjoint sums as
well as most connected sums. We see here the connected sum $G$ of a 2-sphere $O$ (an 
octahedron graph) and a $4$-sphere $C_4 + O$ joined at a vertex $v$. 
The unit sphere $S(v)$ is a disjoint union of a 1-sphere $C_4$ and a 3-sphere $C_4 + C_4$. 
This disjoint union does not satisfy Dehn-Sommerville. By
Gauss-Bonnet (since all other points are Dehn-Sommerville flat), also $G$ is not
Dehn-Sommerville. It is almost, $f+1/2$ would satisfy the Dehn-Sommerville symmetry. 
}
\end{figure}

\begin{figure}[!htpb]
\scalebox{0.971}{\includegraphics{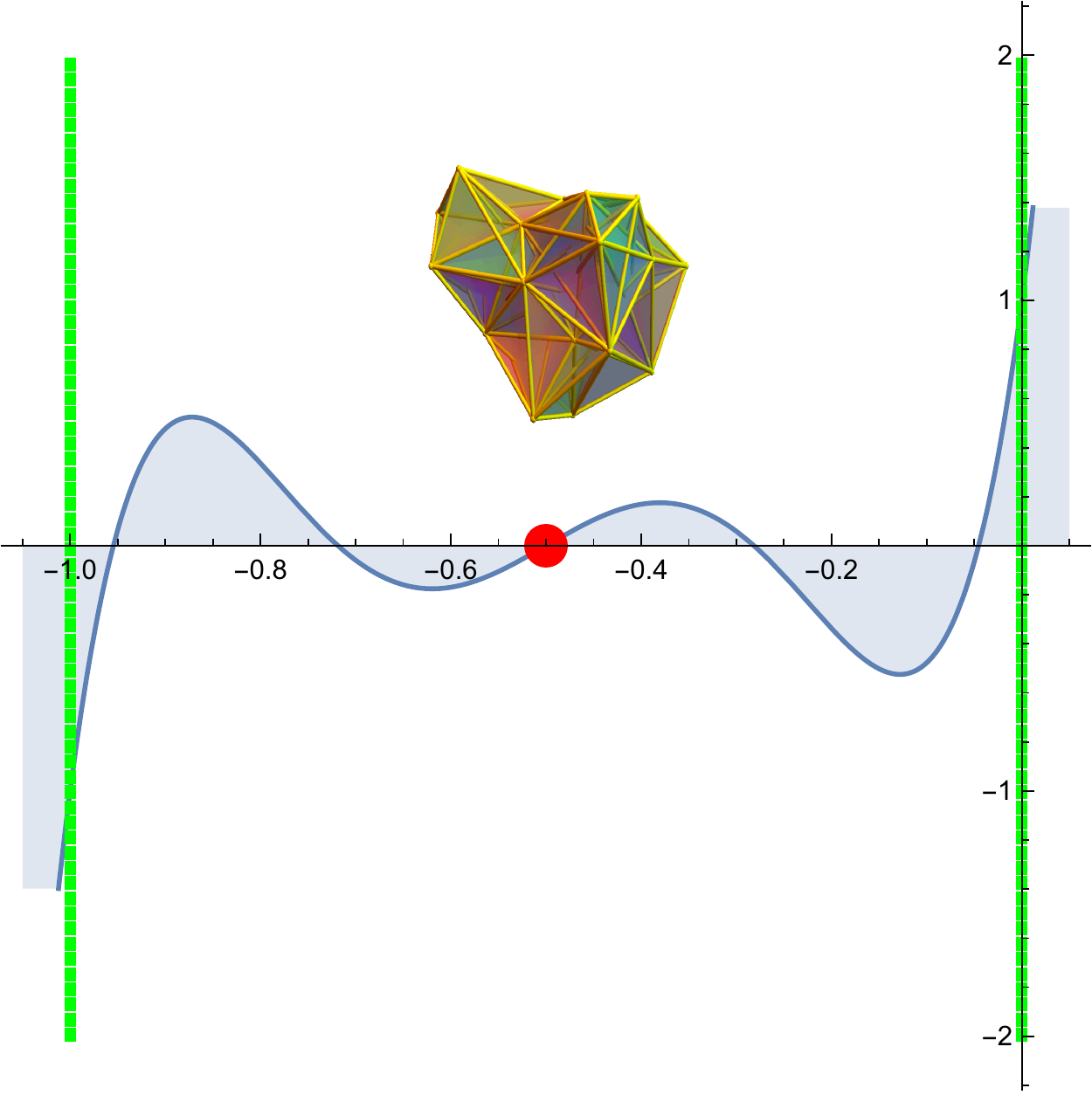}}
\label{randomfoursphere}
\caption{
We see a random four sphere $G$. It is Dehn-Sommville of course. As for 
any even dimensional sphere, there is a root $t=-1/2$ for the 
simplex generating function $f_G(t)$. 
}
\end{figure}

\begin{figure}[!htpb]
\scalebox{0.971}{\includegraphics{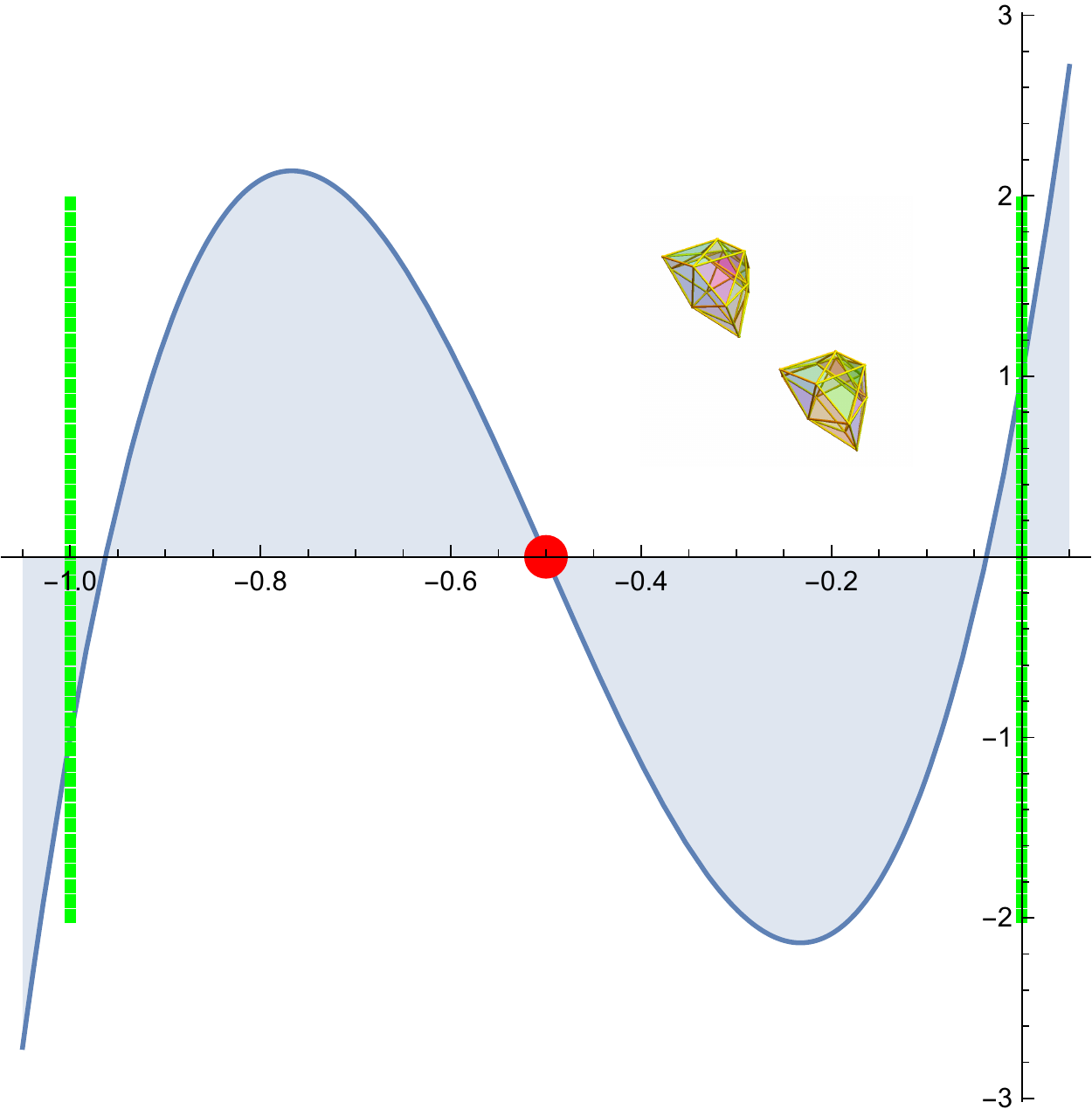}}
\label{projectiveunion}
\caption{
We see the disjoint union of two projective planes. Any even dimensional
manifold of Euler characteristic $2$ satisfies the Dehn-Sommerville condition. 
So also $G=\mathbb{P}^2 \cup \mathbb{P}^2$.  We have $f_G(t) = (1 + 2x)*(1 + 28x + 28x^2)$. 
The Betti-vector is $(b_0,b_1,b_2) = (2,0,0)$. Obviously, Poincar\'e duality is failing 
for $G$ as $G$ is non-orientable. Still, Dehn-Sommerville is intact. 
}
\end{figure}

\begin{figure}[!htpb]
\scalebox{0.971}{\includegraphics{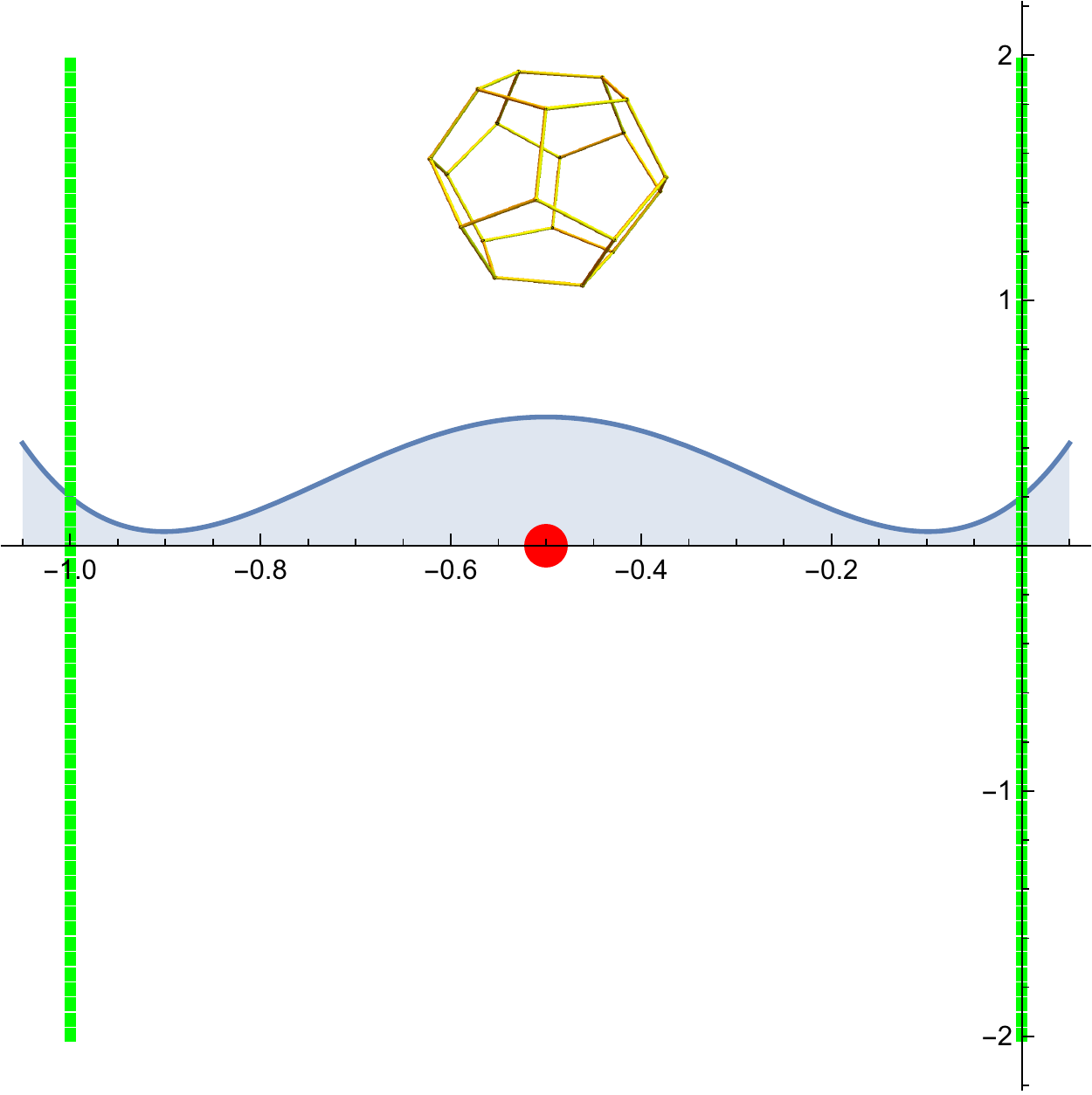}}
\label{poincaresphere}
\caption{
We see the graph of $f_G(t) = 1+16t + 106 t^2+180 t^3+90 t^4$, where $G$ is a {\bf Poincar\'e sphere complex}
with $16$ zero-dimensional simplices, found in \cite{BjoernerLutz}. All 4 roots of $f_G$ are complex. 
As a 3-manifold with zero Euler characteristic, $G$ must be Dehn-Sommerville.
}
\end{figure}

\begin{figure}[!htpb]
\scalebox{0.971}{\includegraphics{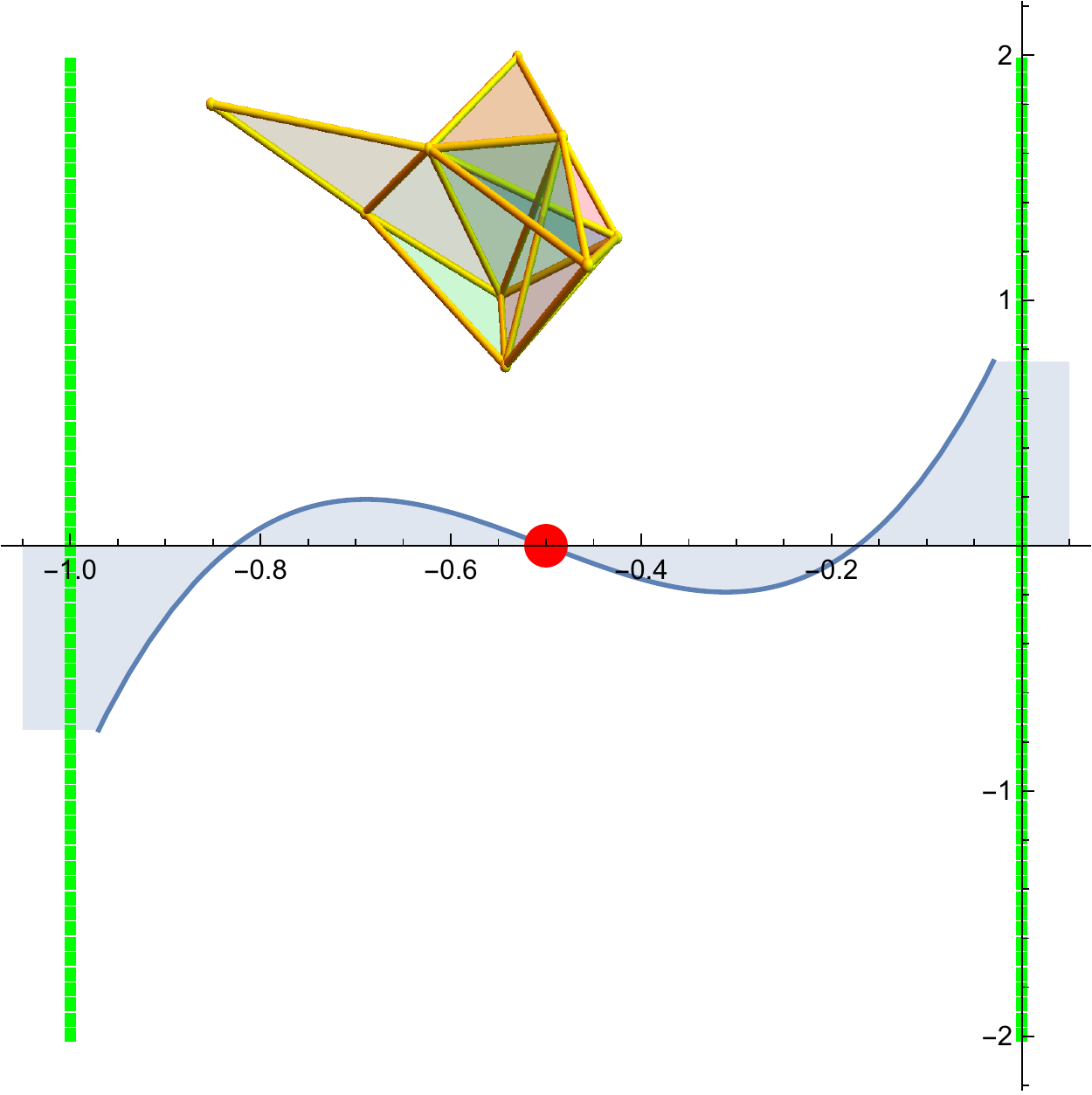}}
\label{poincaresphere}
\caption{
We see a graph obtained by poking around randomly in Erdoes-Renyi spaces
looking for Dehn-Sommerville graphs. This example has the simplex generating function
$f_G(t) = 1 + 9t + 21t^2 + 14t^3$ but it is not a 2-sphere. It is not a manifold but 
has the Betti numbers $(b_0,b_1,b_2)=(1,0,1)$ of the $2$-sphere. 
The graph has inductive dimension $2$. It is interesting
for us because it is an example which is not Dehn-Sommerville flat. It shows that there
are Dehn-Sommerville complexes for which some unit spheres are not Dehn-Sommerville. 
The complex is {\bf Dehn-Sommerville non-flat}. In other words, having zero Dehn-Sommerville
curvatures (unit spheres are Dehn-Sommerville) is only sufficient and not necessary for
$G$ to be Dehn-Sommerville. 
}
\end{figure}

\section{Code}

\paragraph{}
Here is Mathematica code (see ArXiv version to copy paste) which computes 
$F_{S(x)}(t)$, then adds up to $f_G(t)$. 

\begin{small}
\lstset{language=Mathematica} \lstset{frameround=fttt}
\begin{lstlisting}[frame=single]
UnitSphere[s_,a_]:=Module[{b=NeighborhoodGraph[s,a]},
  If[Length[VertexList[b]]<2,Graph[{}],VertexDelete[b,a]]];
UnitSpheres[s_]:=Module[{v=VertexList[s]},
  Table[UnitSphere[s,v[[k]]],{k,Length[v]}]];
ErdoesRenyi[M_,p_]:=Module[{q,e,a},V=Range[M];
  e=EdgeRules[CompleteGraph[M]]; q={};
  Do[If[Random[]<p,q=Append[q,e[[j]]]],{j,Length[e]}];
  UndirectedGraph[Graph[V,q]]];
CliqueNumber[s_]:=Length[First[FindClique[s]]];
ListCliques[s_,k_]:=Module[{n,t,m,u,r,V,W,U,l={},L},L=Length;
  VL=VertexList;EL=EdgeList;V=VL[s];W=EL[s]; m=L[W]; n=L[V];
  r=Subsets[V,{k,k}];U=Table[{W[[j,1]],W[[j,2]]},{j,L[W]}];
  If[k==1,l=V,If[k==2,l=U,Do[t=Subgraph[s,r[[j]]];
  If[L[EL[t]]==k(k-1)/2,l=Append[l,VL[t]]],{j,L[r]}]]];l];
Whitney[s_]:=Module[{F,a,u,v,d,V,LC,L=Length},V=VertexList[s];
  d=If[L[V]==0,-1,CliqueNumber[s]];LC=ListCliques;
  If[d>=0,a[x_]:=Table[{x[[k]]},{k,L[x]}];
  F[t_,l_]:=If[l==1,a[LC[t,1]],If[l==0,{},LC[t,l]]];
  u=Delete[Union[Table[F[s,l],{l,0,d}]],1]; v={};
  Do[Do[v=Append[v,u[[m,l]]],{l,L[u[[m]]]}],{m,L[u]}],v={}];v];
Fvector[s_]:=Delete[BinCounts[Map[Length,Whitney[s]]],1];
Ffunction[s_,x_]:=Module[{f=Fvector[s],n},n=Length[f];
  If[Length[VertexList[s]]==0,1,1+Sum[f[[k]]*x^k,{k,n}]]];
DehnSommerville[s_]:=Module[{f},Clear[x];f=Ffunction[s,x];
   Simplify[f] === Simplify[(f /. x->-1-x)]];
Curvature[s_,x_]:=Module[{g=Ffunction[s,y]},
  Integrate[g,{y,0,x}]];
EulerChi[s_]:=Module[{f=Fvector[s]}, 
  -Sum[f[[k]](-1)^k,{k,Length[f]}]]
Curvatures[s_,x_]:=Module[{S=UnitSpheres[s]},
   Table[Curvature[S[[k]],x],{k,Length[S]}]];

s=ErdoesRenyi[16,0.4]; 
{Ffunction[s,x],Curvatures[s,x]}
{EulerChi[s],-Total[Curvatures[s,x]] /. x->-1}

threesphere=UndirectedGraph[Graph[{1->3,1->4,1->5,1->6,1->7,
1->8,3->2,3->5,3->6,3->7,3->8,4->2,4->5,4->6,4->7,4->8,5->2,
5->7,5->8,6->2,6->7,6->8,7->2,8->2}]];
Print[DehnSommerville[threesphere]];
Print[DehnSommerville[StarGraph[10]]];
Print[DehnSommerville[WheelGraph[10]]];
Print[DehnSommerville[CompleteGraph[5]]];
Print[DehnSommerville[CompleteGraph[{3,3}]]];
Print[DehnSommerville[CycleGraph[5]]];
\end{lstlisting}
\end{small}

\paragraph{}
And here are the Barycentric invariants

\begin{small}
\lstset{language=Mathematica} \lstset{frameround=fttt}
\begin{lstlisting}[frame=single]
A[n_]:=Table[StirlingS2[j,i]*i!,{i,n+1},{j,n+1}];
Invariants[n_]:=Eigenvectors[Transpose[A[n]]];
MatrixForm[Transpose[Reverse[Invariants[4]]]]

Fcrosspolytop[n_]:= Delete[CoefficientList[(1+2x)^(n+1),x],1]
Binvariants=Invariants[4];f=Fcrosspolytop[4]; (* 4-sphere *)
Print[f];
Print[MatrixForm[Binvariants]]; 
Binvariants[[2]].f
Binvariants[[4]].f 
\end{lstlisting}
\end{small}

\section{Questions}

\paragraph{}
(A) One open question is: for which complexes are there non-real roots of $f$?

We measure that for all simplicial $G$, the roots of $f_{G_n}$ for 
Barycentric refinements $G_n$ of $G=G_0$ are 
real and contained in the open interval $(-1,0)$ if $n$ is large enough. This is a still unsolved
concrete calculus problem as it requires to find roots of explicitly given polynomials defined by
Perron-Frobenius eigenvectors of the Barycentric refinement operator.

\paragraph{}
We have never seen that non-real roots from $f$ appear after a few Barycentric refinements.
Non-real roots can appear for Whitney complexes of random graphs, for non-Whitney complexes like 
{\bf homology spheres} $G$ like the one with $f=1+16t+106t^2+180t^3+90t^4$ or the 
{\bf Barnette 3-sphere} with $f=1+8t+27t^2+38t^3+19t^4$ or
then the boundary sphere of a simplex like the {\bf tetrahedral sphere} with 
$f=1+4t+6t^2+4t^3$ or sphere-CW-complexes like the {\bf cube} $f=1+8t+12t^2+6t^3$, 
the roots of $f$ can become complex. 

\paragraph{}
(B) An other question is to see how large the set of Dehn-Sommville complexes are if
we look at all graphs with $n$ vertices. For $n=1,2,3$ there are none, for $n=4$, 
there is only the cyclic graph, for  $n=5$, we have only cyclic $C_5$ of $C_4$ with 
a hair. When fishing randomly in the pool of Erd\"os-R\'enyi graphs, we come up empty 
in general. The probability of having a Dehn-Sommville complex must be very small. 
One can wonder whether the probability is exponentially small in $n$ or super exponentially
small in $n$.  

\bibliographystyle{plain}

\end{document}